\theoremstyle{plain}
\newtheorem{theorem}{Theorem}[section]
\newtheorem{lemma}[theorem]{Lemma}
\newtheorem{prop}[theorem]{Proposition}
\theoremstyle{definition}
\newtheorem{remark}[theorem]{Remark}
\numberwithin{equation}{section}
\theoremstyle{plain}
\numberwithin{equation}{section}
\begin{document}


\title[Harmonic Functions and Mass]{Harmonic Functions and The Mass of 3-Dimensional Asymptotically Flat Riemannian Manifolds}

\author[Bray]{Hubert L. Bray}
\address{
Department of Mathematics\\
Duke University\\
Durham, NC, 27708\\
USA}
\email{bray@math.duke.edu}

\author[Kazaras]{Demetre P. Kazaras}
\author[Khuri]{Marcus A. Khuri}
\address{
Department of Mathematics\\
Stony Brook University \\
Stony Brook, NY, 11794-3660\\
USA}
\email{demetre.kazaras@stonybrook.edu, khuri@math.sunysb.edu}

\author[Stern]{Daniel L. Stern}
\address{
Department of Mathematics\\
University of Toronto\\
Toronto, ON, M5S 2E4\\
Canada
}
\email{dl.stern@utoronto.ca}

\thanks{D. Kazaras acknowledges the support of NSF Grant DMS-1547145. M. Khuri acknowledges the support of NSF Grant DMS-1708798.}

\begin{abstract}
An explicit lower bound for the mass of an asymptotically flat Riemannian 3-manifold is given in terms of linear growth harmonic functions and scalar curvature. As a consequence, a new proof of the positive mass theorem is achieved in dimension three.
The proof has parallels with both the Schoen-Yau minimal hypersurface technique and Witten's spinorial approach. In particular, the role of harmonic spinors and the Lichnerowicz formula in Witten's argument is replaced by that of harmonic functions and a formula introduced by the fourth named author in recent work, while the level sets of harmonic functions take on a role similar to that of the Schoen-Yau minimal hypersurfaces.
\end{abstract}

\maketitle

\section{Introduction}
\label{sec1} \setcounter{equation}{0}
\setcounter{section}{1}

Let $(M,g)$ be a smooth connected 3-dimensional asymptotically flat Riemannian manifold with nonnegative scalar curvature $R_g \geq 0$. The notion of asymptotic flatness means that there is a compact set $\mathcal{K}\subset M$ such that $M\setminus \mathcal{K}=\cup_{k=1}^{k_0}M_{end}^{k}$ where the ends $M_{end}^k$ are pairwise disjoint and diffeomorphic to the complement of a ball $\mathbb{R}^3 \setminus B_1$, and there exists in each end a coordinate system satisfying
\begin{equation}\label{af}
|\partial^l (g_{ij}-\delta_{ij})(x)|=O(|x|^{-q-l}),
\end{equation}
for some $q>\tfrac{1}{2}$ and with $l=0,1,2$. The scalar curvature is assumed to be integrable $R_g \in L^1(M)$ so that the ADM mass of each end is well-defined \cite{Bartnik} and given by
\begin{equation}
m=\lim_{r\rightarrow\infty}\frac{1}{16\pi}\int_{S_{r}}\sum_i (g_{ij,i}-g_{ii,j})\upsilon^j dA,
\end{equation}
where $\upsilon$ is the unit outer normal to the coordinate sphere $S_r$ of radius $r=|x|$ and $dA$ denotes its area element. The positive mass theorem asserts that this parameter has a sign, and it characterizes Euclidean space as the unique manifold in this class with vanishing mass.

\begin{theorem}\label{pmt}
If $(M,g)$ is complete and asymptotically flat with nonnegative scalar curvature then $m\geq 0$, and $m=0$ if and only if $(M,g)\cong(\mathbb{R}^3, \delta)$.
\end{theorem}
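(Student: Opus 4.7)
The plan is to reduce positivity of the mass to a nonnegative integral identity driven by a single harmonic function on $(M,g)$, which plays the role of a harmonic spinor in Witten's argument or of a stable minimal surface in the Schoen-Yau approach. Fix an end $M_{end}^1$ with asymptotic coordinate $x^3$, say, and produce a harmonic function $u:M\to\mathbb{R}$ with the asymptotic expansion $u(x)=x^3+O(|x|^{1-q})$ in this distinguished end and appropriate decay in the other ends. Existence follows from Bartnik's Fredholm theory for the Laplacian on weighted Sobolev spaces adapted to asymptotic flatness.

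The analytic core is an integral identity of the type introduced by the fourth-named author. Starting from the Bochner formula $\tfrac{1}{2}\Delta|\nabla u|^2=|\nabla^2 u|^2+\mathrm{Ric}(\nabla u,\nabla u)$ with $\Delta u=0$, one decomposes $\nabla^2 u$ along and across regular level sets $\Sigma_t=u^{-1}(t)$ and invokes the traced Gauss equation to replace the Ricci term by the scalar curvature $R_g$ and the Gaussian curvature $K_{\Sigma_t}$. This yields an identity of the schematic form
\begin{equation*}
\int_{\Sigma_t}\Bigl(\frac{|\nabla^2 u|^2-\bigl|\nabla|\nabla u|\bigr|^2}{|\nabla u|}+\frac{R_g}{2}|\nabla u|\Bigr)\,dA \;=\; \int_{\Sigma_t}K_{\Sigma_t}|\nabla u|\,dA \;-\; \text{tangential divergence},
\end{equation*}
valid at regular values $t$. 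Integrating in $t$ via the coarea formula and applying Gauss-Bonnet on the generically smooth surfaces $\Sigma_t$ converts this into an inequality with a pointwise nonnegative bulk integrand on $M$ on one side and, on the other, a flux across the sphere at infinity plus a topological term proportional to $\int_{\mathbb{R}}\chi(\Sigma_t)\,dt$.

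The main computation is to identify this asymptotic flux. Using $u=x^3+O(|x|^{1-q})$ together with the definition of the ADM mass, a direct expansion on the coordinate spheres $S_r$ in the distinguished end should give flux $=c\cdot m$ for an explicit positive constant $c$, while the other ends contribute negligibly by decay of $u$ there. Combined with the topological control $\chi(\Sigma_t)\leq 2$ for generic $t$ --- more precisely, the observation that typical level sets in the distinguished end contain a single noncompact planar-type component --- one arrives at a bound of the form $m\geq c^{-1}\int_M(\text{nonnegative})\,dV\geq 0$.

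The main obstacle will be rigorous control of the level set geometry: $u$ may have critical values, its regular level sets may be noncompact with nontrivial topology, and the Gauss-Bonnet bookkeeping is delicate in the noncompact setting. The natural approach is to work at regular values via Sard, to truncate $\Sigma_t$ by large coordinate spheres, and to pass carefully to the limit to recover the asymptotic flux. For the rigidity case $m=0$, the identity forces $\nabla^2 u\equiv 0$ and hence $|\nabla u|\equiv 1$; repeating the construction with three independent asymptotic directions yields a global parallel orthonormal frame on $TM$, so $g$ is flat, and asymptotic flatness upgrades this to an isometry $(M,g)\cong(\mathbb{R}^3,\delta)$.
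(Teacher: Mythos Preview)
Your analytic skeleton matches the paper's: Bochner plus the traced Gauss equation on level sets, coarea, Gauss--Bonnet, and identification of the boundary flux with the mass. The rigidity argument via three independent asymptotically linear harmonic functions is also exactly the paper's.

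The genuine gap is the step you flag as ``the main obstacle'' but do not resolve: controlling $\chi(\Sigma_t)$. Your bound $\chi(\Sigma_t)\leq 2$ is not what is needed (and is not automatic); after truncation by a large sphere or cylinder one needs the \emph{compact} piece $\Sigma_t^r$ to be a single connected surface with a single boundary circle, so that $\chi(\Sigma_t^r)\leq 1$. Without this, the Gauss--Bonnet term $2\pi\chi(\Sigma_t^r)$ integrated in $t$ over an interval of length $\sim 2r$ contributes $4\pi r$, which must exactly cancel the leading $4\pi r$ coming from the total geodesic curvature; any slack here swamps the mass. On a general complete asymptotically flat $(M,g)$ with possibly several ends and nontrivial $H_2$, regular level sets of $u$ can have closed components and multiple unbounded components, and there is no mechanism to force $\chi\leq 1$. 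Your phrase ``appropriate decay in the other ends'' does not help: a harmonic function asymptotic to $x^3$ in one end will be asymptotically constant, not decaying, in the others, and its level sets will run off into those ends.

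The paper fixes this by first \emph{preparing the topology}: pass to the exterior region of the chosen end (diffeomorphic to $\mathbb{R}^3$ minus finitely many balls, with minimal boundary) and then either (i) cap off the boundary spheres so the ambient manifold is $\mathbb{R}^3$ and use a Schwarzschild density theorem before running your argument, or (ii) keep the boundary but impose the homogeneous Neumann condition on $u$ there and run the inequality on cylindrical domains. In either case the key consequence is that regular level sets have no closed components (maximum principle plus $H_2=0$), and a transversality argument at the outer boundary shows each truncated level is connected with $\partial\Sigma_t^r\cong S^1$, giving the required $\chi\leq 1$. You should insert one of these reductions before your coarea/Gauss--Bonnet step; with it in place, the rest of your outline goes through essentially as in the paper.
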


This theorem was first established in the late 1970's by Schoen and Yau \cite{SchoenYauI,SchoenYauIII} via a contradiction argument, and is based on the existence of stable minimal hypersurfaces along with manipulations of the stability inequality. Shortly after this Witten \cite{ParkerTaubes,Witten} found an alternate proof in which the mass is expressed as a sum of squares. This proof relies on the existence of harmonic spinors and the Lichnerowicz formula. More recently two other proofs have been given in the general case. One by Huisken and Ilmanen \cite{HuiskenIlmanen}, which arose out of their study of the Penrose inequality, follows from the existence of a weak version of inverse mean curvature flow and  monotonicity of Hawking mass. The other is a Ricci flow proof and is due to Li \cite{Li}. Further proofs have been given in special cases, such as that of Brill \cite{Brill} in the axisymmetric setting. It should also be noted that Lohkamp \cite{Lohkamp} showed how the positive mass theorem can be reduced to the nonexistence of positive scalar curvature metrics on the connected sum $N\# T$ of a compact manifold $N$ with a torus $T$. See \cite{Lee} for a survey of these topics. Furthermore, we point out the articles of Schoen and Yau \cite{SchoenYauIV} and Lohkamp \cite{Lohkamp1} which address the higher dimensional case.

The purpose of the current article is to give an explicit lower bound for the mass in terms of linear growth harmonic functions and scalar curvature. This approach is based on an integral inequality due to Stern \cite{Stern}, and leads to a new and relatively simple proof of Theorem \ref{pmt}. Associated with each asymptotic end $M_{end}$ there is a corresponding \textit{exterior region} $M_{ext} \supset M_{end}$, which is diffeomorphic to the complement of a finite number of balls (with disjoint closure) in $\mathbb{R}^3$ and has minimal boundary \cite[Lemma 4.1]{HuiskenIlmanen}.

\begin{theorem}\label{mass.bd}
Let $(M_{ext},g)$ be an exterior region of a complete asymptotically flat Riemannian 3-manifold $(M,g)$ with mass $m$. Let $u$ be a harmonic function on $(M_{ext},g)$ satisfying Neumann boundary conditions at $\partial M_{ext}$, and which is asymptotic to one of the asymptotically flat coordinate functions of the associated end.
Then
\begin{equation}\label{masslowerb}
m \geq \frac{1}{16\pi} \int_{M_{ext}}\left(\frac{|\nabla^2 u|^2}{|\nabla u|}+R_g |\nabla u|\right) dV.
\end{equation}
In particular, if the scalar curvature is nonnegative then $m\geq 0$. Furthermore, if $m=0$ then $(M,g)\cong(\mathbb{R}^3,\delta)$.
\end{theorem}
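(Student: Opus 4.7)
My plan is to derive a pointwise Bochner--Stern identity for $u$ and integrate it, with the ADM mass emerging from the combined asymptotic contributions at infinity. Set $\phi = |\nabla u|$; on a regular level set $\Sigma_t = u^{-1}(t)$, let $\nu = \nabla u/\phi$, $A$ the second fundamental form, $H = \mathrm{tr}\, A$, and $K_\Sigma$ its Gauss curvature. Using $\Delta u = 0$, the decomposition of $\nabla^2 u$ along and across $\Sigma_t$ gives $\nu(\phi) = -\phi H$ (from the trace-free condition) and $|\nabla^2 u|^2 = 2|\nabla\phi|^2 - 2\phi^2\det A$ (algebraically, using $|A|^2 - H^2 = -2\det A$ in two dimensions). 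Substituting into the Bochner formula $\phi\Delta\phi = |\nabla^2 u|^2 - |\nabla\phi|^2 + \phi^2\mathrm{Ric}(\nu,\nu)$ together with the Gauss equation $\mathrm{Ric}(\nu,\nu) = \tfrac12 R_g - K_\Sigma + \det A$, the $\det A$ contributions cancel, yielding the key pointwise identity at each point with $\phi > 0$:
\begin{equation*}
2\Delta|\nabla u| \;=\; \frac{|\nabla^2 u|^2}{|\nabla u|} + R_g|\nabla u| - 2K_\Sigma|\nabla u|.
\end{equation*}

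Next, I would integrate this identity over $M_{ext}\cap B_R$ and send $R\to\infty$. The critical set of $u$ is handled by replacing $\phi$ by $\sqrt{\phi^2+\varepsilon}$, sending $\varepsilon\downarrow 0$ with the help of a Kato-type inequality; Sard's theorem supplies regular values of $u$ for the coarea decomposition. The divergence theorem converts the left-hand side into a flux on $S_R$ plus a flux on $\partial M_{ext}$, while coarea rewrites the Gauss-curvature term as $\int_{\mathbb{R}}\int_{\Sigma_t}K_\Sigma\, dA\,dt$. A careful asymptotic analysis using the expansion $u = x^1 + O(|x|^{1-q})$ and \eqref{af}, combined with the Gauss--Bonnet contributions from the asymptotically planar ends of the level sets $\Sigma_t$ (each asymptotic to the plane $\{x^1=t\}$, contributing $2\pi$ of geodesic curvature at infinity), should identify the combined boundary flux at infinity and the level-set Euler-characteristic contribution as exactly $16\pi m$.

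The main obstacle is controlling the intermediate Gauss--Bonnet integrals together with the boundary term on $\partial M_{ext}$. For a regular value $t$, $\Sigma_t$ meets $\partial M_{ext}$ orthogonally by the Neumann condition. I would apply Gauss--Bonnet to $\Sigma_t\cap B_R$ with geodesic-curvature integrals along $\Sigma_t\cap S_R$ and $\Sigma_t\cap\partial M_{ext}$, take $R\to\infty$, and integrate in $t$. The boundary contributions at $\partial M_{ext}$ (from both Gauss--Bonnet and the flux of $\phi$) should combine---using the minimality $H_{\partial M_{ext}} = 0$ and the identity $\partial_\nu\phi = -A_{\partial M_{ext}}(\nabla u,\nabla u)/\phi$ obtained by differentiating the Neumann condition along $\partial M_{ext}$---into a term of the favorable sign. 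Controlling the possible genus and number of components of the level sets, presumably via Morse theory applied to $u$ together with the simple topology of the exterior region (complement of finitely many balls in $\mathbb{R}^3$), is the subtlest point; success here yields $16\pi m \;\geq\; \int_{M_{ext}}\bigl(|\nabla^2 u|^2/|\nabla u| + R_g|\nabla u|\bigr)dV$.

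For the rigidity statement, if $m = 0$ and $R_g\geq 0$, equality throughout forces $\nabla^2 u\equiv 0$, so $|\nabla u|\equiv 1$ (by the asymptotic normalization) and $\nabla u$ is a parallel unit vector field, with $R_g\equiv 0$. Repeating the construction with three harmonic functions $u_1, u_2, u_3$ asymptotic to the coordinate functions $x^1, x^2, x^3$ yields three parallel vector fields orthonormal at infinity, hence orthonormal everywhere on $M_{ext}$; the map $(u_1, u_2, u_3)\colon M_{ext}\to\mathbb{R}^3$ is then a local isometry, promoted by the asymptotic form and completeness to a global isometry. The exterior region construction then forces $\partial M_{ext} = \emptyset$ (the horizon of a flat manifold is empty) so $M = M_{ext}\cong\mathbb{R}^3$.
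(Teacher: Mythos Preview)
Your outline is essentially the paper's own strategy: the pointwise identity you derive is the content of Lemma~\ref{lem:bochner} combined with the Bochner formula as in Proposition~\ref{prop:gen.stern}, the $\sqrt{\phi^2+\varepsilon}$ regularization and Sard argument are the same, the inner-boundary treatment via Neumann plus minimality is the same, and your rigidity argument matches Section~\ref{sec6}.

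Two of your technical choices differ from the paper's and are worth flagging. First, you exhaust by coordinate balls $B_R$, whereas the paper exhausts by coordinate \emph{cylinders} $C_L$ aligned with $u$. On cylinders the level sets meet the lateral wall $T_L$ in near-round circles and are constant on the caps $D_L^\pm$; this is what makes the geodesic-curvature computation (Lemma~\ref{curv.comp}) and the flux $\int_{C_L}\partial_\upsilon|\nabla u|$ (Lemma~\ref{ddu.comp}) tractable for a \emph{general} asymptotically flat metric, and their sum reproduces the ADM integrand directly. The sphere computation you propose is only carried out in the paper (Section~\ref{sec5}) after first reducing to exact Schwarzschild asymptotics; for general decay \eqref{af} it is substantially messier, and you have underestimated that step.

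Second, for the bound $\chi(\Sigma_t)\le 1$ you gesture at Morse theory, but harmonic functions need not be Morse. The paper instead uses a short maximum-principle argument: any component of $\Sigma_t$ inside $\Omega_L$ disjoint from the outer boundary would, together with pieces of $\partial M_{ext}$, bound a region on which $u$ has constant Dirichlet data and homogeneous Neumann data, forcing $u\equiv t$ and contradicting regularity of $t$. This gives connectedness of $\Sigma_t^L$ (hence $\chi\le 1$) with no nondegeneracy assumptions. One minor correction: at $\partial M_{ext}$ the terms do not merely have a ``favorable sign''---with $H_{\partial M_{ext}}=0$ your identity $\partial_\upsilon\phi=-\phi^{-1}A_{\partial M_{ext}}(\nabla u,\nabla u)$ yields exactly $\phi\,\kappa_{\partial\Sigma_t}$, which is precisely the geodesic-curvature contribution Gauss--Bonnet requires along $\partial\Sigma_t\cap\partial M_{ext}$, so those inner-boundary terms cancel exactly.
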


Two approaches to the positive mass theorem will be presented within the context of the harmonic level set technique. They differ in their handling of the exterior region boundary, and in their use of the asymptotically flat geometry. In the first method Neumann boundary conditions are imposed on $\partial M_{ext}$ in order to deal with boundary terms appearing in Stern's integration formula \cite{Stern}, while in the second method the Mantoulidis neck construction \cite{Mantoulidis} is used to cap-off the boundary spheres so that the resulting manifold is diffeomorphic to $\mathbb{R}^3$ and still possesses nonnegative scalar curvature. Within the asymptotic end harmonic coordinates are employed along with cylindrical domains in the first approach to extract the mass and compute total geodesic curvatures. On the other hand, the second approach utilizes a density theorem to reduce the asymptotic geometry to that of Schwarzschild where the analysis is then performed on coordinate spheres.



\section{Preparing the Data}
\label{sec2} \setcounter{equation}{0}
\setcounter{section}{2}

Within the context of the $3$-dimensional positive mass theorem, simplifications of the asymptotics and topology may be assumed without loss of generality. More precisely Schoen and Yau \cite{SchoenYauIII} showed that metrics with harmonic asymptotics are dense in the relevant class of metrics, and Bray \cite{Bray} (see also \cite[Proposition 3.3]{CorvinoPollack}) extended this to show that in fact harmonic asymptotics may be replaced with Schwarzschild asymptotics. As for the topology of $M$, one may consider the portion of $M$ outside the outermost minimal surface, and fill in the resulting spherical holes using work of Mantoulidis \cite{Mantoulidis} and the Miao smoothing \cite{Miao}. This procedure allows one to reduce the topology of $M$ to $\mathbb{R}^3$. It should be noted that this reduction is specific to dimension 3.

\begin{prop}\label{simplify}
Let $(M,g)$ be a smooth 3-dimensional complete asymptotically flat Riemannian manifold having nonnegative scalar curvature $R_g \geq 0$, and with mass $m$ of a designated end $M_{end}^+$. Given $\varepsilon>0$, there exists a smooth $3$-dimensional complete asymptotically flat Riemannian manifold $(\overline{M},\overline{g})$ with nonnegative scalar curvature $R_{\overline{g}}\geq 0$ and satisfying the following properties.
\begin{enumerate}
\item The underlying manifold $\overline{M}$ is diffeomorphic to $\mathbb{R}^3$.

\item The mass $\overline{m}$ of the single end $\overline{M}_{end}$ satisfies $|m-\overline{m}|<\varepsilon$.

\item In the asymptotic coordinates of $\overline{M}_{end}$, $\overline{g}=\left(1+\tfrac{\overline{m}}{2r}\right)^4 \delta$.
\end{enumerate}
\end{prop}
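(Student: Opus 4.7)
The plan is to carry out two simplifications in sequence, following the strategy outlined in the paragraph preceding the proposition: first simplify the topology so that $\overline{M}\cong\mathbb{R}^3$, then simplify the asymptotics so that the metric is exactly Schwarzschild near infinity. At each step I must control the change in mass and preserve nonnegative scalar curvature.

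\textbf{Step 1 (topology).} Pass to the outermost minimal surface $\Sigma\subset M$ associated with the distinguished end $M_{end}^+$, and let $M_{ext}$ be the component of $M\setminus\Sigma$ containing this end. By Huisken--Ilmanen's Lemma 4.1 already cited in the introduction, $M_{ext}$ is diffeomorphic to $\mathbb{R}^3$ minus finitely many open balls, with boundary components that are area-minimizing spheres. Note that restricting to $M_{ext}$ does not change the mass of $M_{end}^+$ and preserves $R_g\geq 0$. Next, to each boundary sphere $\Sigma_i$ (equipped with its induced round-or-nearly-round metric up to small perturbation), I attach a Mantoulidis neck \cite{Mantoulidis}: a collar $[0,1]\times S^2$ carrying a metric of positive scalar curvature, outer boundary isometric to $\Sigma_i$, outer mean curvature zero, and inner boundary a round sphere of area arbitrarily close to $|\Sigma_i|$. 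I then fill in each round sphere with a Schwarzschild half-space of matching boundary data, having zero scalar curvature and positive boundary mean curvature from the inside. Because both boundary sides of each Mantoulidis--exterior gluing are minimal, the resulting $C^0$ metric with matching mean curvatures may be regularized via the Miao smoothing procedure \cite{Miao}, producing a smooth metric with $R\geq 0$ (distributionally, then actually) after an arbitrarily small conformal correction. By choosing the neck parameters appropriately and taking all approximations small, the resulting manifold $\overline{M}_1$ is diffeomorphic to $\mathbb{R}^3$, has one end isometric to the original $M_{end}^+$ outside a compact set, hence unchanged mass $m$, and has $R\geq 0$.

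\textbf{Step 2 (asymptotics).} Starting from $(\overline{M}_1,g_1)$, I invoke the Schoen--Yau density theorem \cite{SchoenYauIII} to approximate $g_1$ in a weighted $C^2$ sense by a metric $g_2$ with harmonic asymptotics and $R_{g_2}\geq 0$, whose mass differs from $m$ by less than $\varepsilon/2$. Then I apply Bray's refinement \cite{Bray} (alternatively \cite[Proposition~3.3]{CorvinoPollack}), which shows that metrics with exactly Schwarzschild asymptotics are similarly dense: this produces $\overline{g}$ with $R_{\overline{g}}\geq 0$, still on a manifold diffeomorphic to $\mathbb{R}^3$, whose mass $\overline{m}$ satisfies $|m-\overline{m}|<\varepsilon$, and such that in the asymptotic chart $\overline{g}=(1+\overline{m}/2r)^4\delta$ outside a compact set. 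This is precisely the Schwarzschild asymptotic form required by conclusion (3).

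\textbf{Main obstacle.} The delicate point is the gluing in Step 1: ensuring simultaneously that the gluing is smooth enough for Stern's integral identity to apply (so $C^2$ regularity of $\overline{g}$ is needed), that $R_{\overline{g}}\geq 0$ is preserved through Miao's smoothing (which a priori only yields $R\geq 0$ distributionally, requiring a small conformal deformation that reduces the mass by at most $\varepsilon/3$), and that the Mantoulidis parameters can be chosen so that the total mass change from inserting the necks and Schwarzschild caps is under control. The asymptotic simplification in Step 2 is comparatively standard once Step 1 is in hand, since the density theorems of Schoen--Yau and Bray are designed precisely to preserve $R\geq 0$ while approximating in a topology strong enough to control the boundary integral defining the ADM mass.
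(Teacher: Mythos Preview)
Your overall strategy---cut to the exterior region, cap the boundary spheres, smooth via Miao, then apply a density theorem---matches the paper. There are, however, two genuine gaps in Step~1.

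First, your capping procedure does not produce a manifold diffeomorphic to $\mathbb{R}^3$. A ``Schwarzschild half-space'' (either side of the horizon) is noncompact and asymptotically flat; gluing such a piece behind each boundary sphere would add new ends rather than fill the holes. What is needed is a compact $3$-ball filling with nonnegative (in fact positive) scalar curvature and minimal boundary isometric to $(\Sigma_i,\gamma_i)$. The paper obtains this directly from Mantoulidis' thesis \cite[Corollary~2.2.13]{Mantoulidis}, which for any $2$-sphere metric with $\lambda_1(-\Delta_{\gamma}+K_{\gamma})>0$ produces exactly such a $3$-ball $(D_i,g_i)$. You have conflated this with the Mantoulidis--Schoen collar construction, which builds \emph{outer} extensions, not inner fillings.

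Second, you have not verified the hypothesis needed for Mantoulidis' construction. Stability of the minimal spheres $\Sigma_i$ in a manifold with $R_g\geq 0$ gives only $\lambda_1(-\Delta_{\gamma_i}+K_{\gamma_i})\geq 0$; strict positivity can fail. The paper handles this by first applying a small conformal deformation (with conformal factor close to $1$, changing the mass by less than any prescribed amount) to arrange $R_g>0$ everywhere. Then stability of a two-sided minimal surface in strictly positive ambient scalar curvature forces $\lambda_1(-\Delta_{\gamma_i}+K_{\gamma_i})>0$, and Mantoulidis' filling applies. Without this preliminary deformation your Step~1 cannot proceed. (The paper also passes to the orientable double cover at the outset, which you should mention.) Once these points are fixed, your Step~2 is fine; indeed the paper simply invokes Bray's density result \cite[Proposition~3.3]{CorvinoPollack} in one stroke, without the intermediate Schoen--Yau step.
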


\begin{proof}
By passing to the orientable double cover if necessary we may assume that $M$ is orientable. Moreover
by applying an appropriate conformal deformation with conformal factor approximating 1, such that the deformed mass differs from the original by an arbitrarily small amount, we may assume that $(M,g)$ has positive scalar curvature $R_{g}>0$ everywhere. Let $\mathcal{S}\subset M$ denote the \textit{trapped region} in the sense of \cite{HuiskenIlmanen}. If $\mathcal{S}=\emptyset$ then $M$ is diffeomorphic to $\mathbb{R}^3$ \cite[Lemma 4.1]{HuiskenIlmanen}, and we set $\overline{M}=M$. The desired metric $\overline{g}$ then arises from Bray's density result \cite[Proposition 3.3]{CorvinoPollack}. If $\mathcal{S}\neq \emptyset$, then
consider $\overline{M}^+$, the metric closure of the component of $M\setminus\mathcal{S}$ containing $M_{end}^+$.
According to \cite[Lemma 4.1]{HuiskenIlmanen} this exterior region is diffeomorphic to the complement of a finite union of balls, so that
\begin{equation}
\overline{M}^+ =\mathbb{R}^3\setminus \cup_{i=1}^k B_i,
\end{equation}
where the spheres $\cup_i S^2_i=\cup_i \partial B_i$ are homologically area outer-minimizing. Since each submanifold $(S^2_i,\gamma_i=g|_{S_i^2})$ is a $2$-sided stable minimal surface in an ambient space of positive scalar curvature, the principal eigenvalue of $-\Delta_{\gamma_i}+K_{\gamma_i}$ is positive, where $K_{\gamma_i}$ denotes Gaussian curvature. The hypotheses of \cite[Corollary 2.2.13]{Mantoulidis} are then satisfied, so that for each $i=1,\dots,k$ there is a Riemannian $3$-ball $(D_i,g_i)$ with positive scalar curvature, minimal boundary, and satisfying $\partial(D_i,g_i)\cong (S^2_i,\gamma_i)$. Glue in these $3$-balls to form
\begin{equation}
\overline{M}=\overline{M}^+\bigcup_{\mathcal{S}}\left(\cup_{i=1}^k D_i\right),
\end{equation}
and equip $\overline{M}$ with a $C^{0,1}$-Riemannian metric that agrees with $g$ on $\overline{M}^+$ and $g_i$ on each $D_i$, see Figure \ref{pic:lemma2-1}. Next, smooth a tubular neighborhood of $\mathcal{S}$ followed by a conformal deformation as in \cite[Sections 3 \& 4]{Miao}, to obtain an asymptotically flat metric $\tilde{g}$ on $\overline{M}$ with nonnegative scalar curvature and mass satisfying $|m-\tilde{m}|<\varepsilon/2$. Now apply Bray's density result \cite[Proposition 3.3]{CorvinoPollack} to $\tilde{g}$ to produce the desired metric $\overline{g}$ with mass $\overline{m}$ satisfying $|\tilde{m}-\overline{m}|<\varepsilon/2$.
\end{proof}

\begin{remark}\label{rem1}
In Proposition \ref{simplify}, the conclusion that $\overline{M}$ is diffeomorphic to $\mathbb{R}^3$ relies on deep results outlined in \cite{HuiskenIlmanen}. In light of this, it is worth pointing out that ultimately we do not require the full strength of this topological simplification. Indeed, the only time this portion of Proposition \ref{simplify} is utilized, is in the proof of Lemma \ref{lharmonic} where only the triviality of $H_2(\overline{M};\mathbb{Z})$ is needed. This weaker simplification may be achieved via more elementary means.  By following the arguments of \cite[page 140]{Lee}, there exists $\widetilde{M}^+$ containing $M_{end}^+$ whose boundary consists of minimal spheres and satisfies $H_2(\widetilde{M}^+ ,\partial\widetilde{M}^+;\mathbb{Z})=0$. Then filling in with discs as above yields the desired conclusion.
%
\end{remark}

\begin{figure}
\begin{picture}(0,0)
\put(105,150){$\mathcal{S}$}
\put(103,153){\vector(-1,0){35}}
\put(113,153){\vector(1,0){50}}
\put(280,130){\Large{$(M,g)$}}
\put(280,50){\Large{$(\overline{M},\overline{g})$}}
\thicklines
\put(130,82){\vector(0,-1){40}}
\put(115,82){\vector(0,-1){40}}
\end{picture}
\includegraphics[scale=.6]{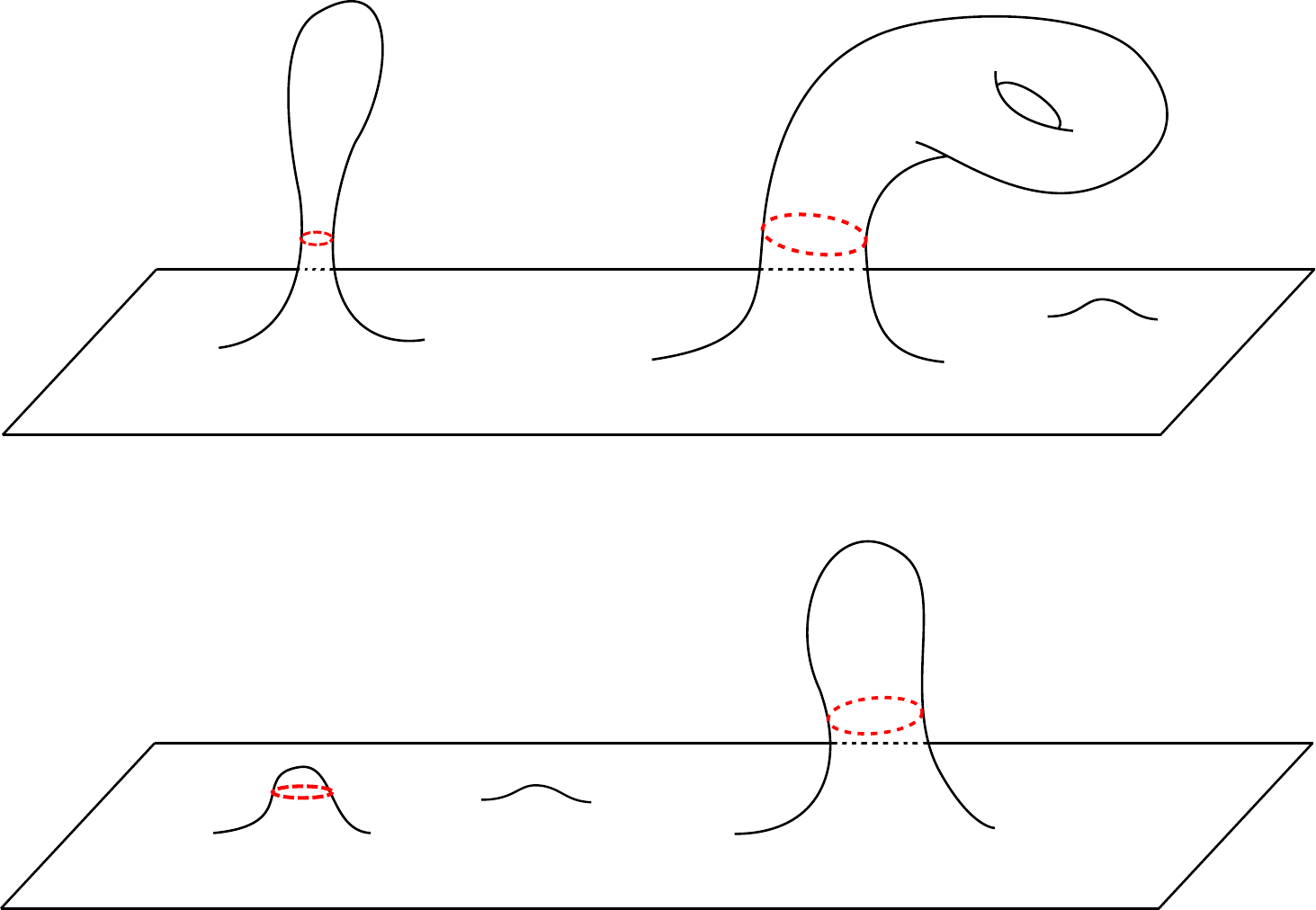}
\caption{A schematic description of the construction in Proposition \ref{simplify}.}\label{pic:lemma2-1}
\end{figure}

A 3-dimensional Riemannian manifold satisfying points (1) and (3) of Proposition \ref{simplify} will be referred to as \textit{Schwarzschildian}. This proposition allows the proof of Theorem \ref{pmt} to be reduced to the following Schwarzschildian case, which will be established in Section \ref{sec5}.

\begin{theorem}\label{scpmt}
If $(M,g)$ is complete and Schwarzschildian with nonnegative scalar curvature then $m\geq 0$, and $m=0$ if and only if $(M,g)\cong(\mathbb{R}^3, \delta)$.
\end{theorem}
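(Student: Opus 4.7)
The plan is to deduce Theorem \ref{scpmt} by a direct application of Theorem \ref{mass.bd}, using three harmonic functions asymptotic to the three Cartesian coordinates at infinity and exploiting the vanishing of mass to force a global parallel orthonormal frame.

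Because $(M,g)$ is Schwarzschildian, $M$ is diffeomorphic to $\mathbb{R}^3$, has empty boundary, and thus $M_{ext}=M$ with no Neumann condition to impose. Standard weighted-Sobolev elliptic theory on asymptotically flat manifolds produces, for each $k=1,2,3$, a harmonic function $u_k\in C^\infty(M)$ with $u_k-x^k$ decaying at infinity at the rate dictated by the Schwarzschild asymptotics, together with corresponding decay for its derivatives. Applying Theorem \ref{mass.bd} to, say, $u_1$ yields
\begin{equation*}
m \geq \frac{1}{16\pi}\int_M\left(\frac{|\nabla^2 u_1|^2}{|\nabla u_1|}+R_g|\nabla u_1|\right)dV \geq 0,
\end{equation*}
which is the nonnegativity statement.

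For rigidity, assume $m=0$. Applying the same inequality to each $u_k$, the two nonnegative integrands must vanish. Hence $\nabla^2 u_k\equiv 0$ on the open set $\{|\nabla u_k|>0\}$ and $R_g|\nabla u_k|\equiv 0$ pointwise. Since $\nabla|\nabla u_k|^2=2\nabla^2 u_k(\nabla u_k,\cdot)$, the function $|\nabla u_k|$ is locally constant on $\{|\nabla u_k|>0\}$; because it is continuous on $M$ and tends to $1$ at infinity, a connectedness argument forces $|\nabla u_k|\equiv 1$ on all of $M$, so $\nabla^2 u_k\equiv 0$ globally and $R_g\equiv 0$. The three vector fields $X_k:=\nabla u_k$ are therefore parallel, their pairwise inner products are constant, and asymptotic orthonormality pins them down: $\langle X_i,X_j\rangle\equiv\delta_{ij}$ on all of $M$. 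Consequently, $\Phi:=(u_1,u_2,u_3):M\to\mathbb{R}^3$ is a local isometry with globally invertible differential, and since $(M,g)$ is complete and simply connected, $\Phi$ is a covering map onto $\mathbb{R}^3$, hence an isometric diffeomorphism.

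Once Theorem \ref{mass.bd} is in hand, the only real work is the rigidity analysis outlined above; the principal technical point is to justify the existence of the $u_k$ with enough decay on $u_k-x^k$ (and its derivatives) that $|\nabla u_k|\to 1$ at infinity. In the Schwarzschildian setting this is immediate from the explicit conformally flat form of $\overline{g}$ and standard Fredholm theory in weighted Sobolev spaces, so it presents no genuine obstacle. The subsequent connectedness and covering-space arguments are routine, making the deduction of Theorem \ref{scpmt} from Theorem \ref{mass.bd} essentially formal.
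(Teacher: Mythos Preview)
Your proposal is correct but takes a genuinely different route from the paper. The paper proves Theorem~\ref{scpmt} directly in Section~\ref{sec5} without invoking Theorem~\ref{mass.bd}: it applies Proposition~\ref{prop:gen.stern} on the balls $M_r$ bounded by coordinate spheres $S_r$ in the Schwarzschild end, computes $\int_{S_r}\partial_\upsilon|\nabla u|\,dA$ explicitly from the conformal form $g=(1+m/2r)^4\delta$, and bounds the Gauss-curvature term via the detailed geodesic-curvature calculation of Proposition~\ref{geodesiccurvature} together with the level-set connectivity Lemma~\ref{l:connected}; passing $r\to\infty$ yields \eqref{hjksa}. You instead cite Theorem~\ref{mass.bd}, which the paper establishes independently in Section~\ref{sec6} via harmonic coordinates and cylindrical exhaustions in the general asymptotically flat setting. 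There is no circularity, since the Section~\ref{sec6} proof of Theorem~\ref{mass.bd} nowhere uses Theorem~\ref{scpmt}, so your deduction is legitimate. What your approach buys is brevity: once Theorem~\ref{mass.bd} is available, the Schwarzschildian case is a trivial specialization. What the paper's approach buys is independence and transparency: Sections~\ref{sec5} and~\ref{sec6} are presented as two self-contained methods, and the Schwarzschild reduction is meant precisely to make the boundary terms computable by elementary means on round spheres rather than by the more delicate cylinder analysis of Lemmas~\ref{ddu.comp} and~\ref{curv.comp}. The rigidity arguments in the two proofs are essentially identical.
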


\section{Linear Growth Harmonic Functions}
\label{sec3} \setcounter{equation}{0}
\setcounter{section}{3}

\subsection{Harmonic functions on Schwarzschildian ends}
Suppose that $(M,g)$ is Schwarzschildian, and in $M_{end}$ write $g =w^4 \delta$ where $w=1+\tfrac{m}{2r}$. Let
\begin{equation}
L_{g}=\Delta_g -\frac{1}{8}R_g
\end{equation}
be the conformal Laplacian. According to the conformal invariance of this operator,
\begin{equation}
L_{g}v=w^{-5}L_{\delta}(w v)
\end{equation}
for any function $v$. Let $\ell(x)=a_i x^i$ be a linear function in the asymptotically flat coordinates $\{x^i\}_{i=1}^3$ on $M_{end}$. Since $R_g\equiv 0$ in $M_{end}$ it follows that
\begin{equation}\label{wl}
\Delta_{g}(\ell w^{-1})=L_{g}(\ell w^{-1})=w^{-5}L_{\delta}\ell
=w^{-5}\Delta_{\delta}\ell=0.
\end{equation}
We can now find harmonic functions on $M$ with the following prescribed linear asymptotics. Given $a_i$ there exists a constant $a$ such that
\begin{equation}\label{uequation}
\begin{cases}
\Delta_{g}u=0 &\text{ on } M ,\\
u(x)=\frac{a_i x^i}{1+\tfrac{m}{2r}}+\frac{a}{r}+O_2(r^{-2}) & \text{ in } M_{end},
\end{cases}
\end{equation}
where the notation $v=O_l(r^{-k})$ asserts that $|\partial^j v|\leq Cr^{-k-j}$ for $j\leq l$. To see this, let $u_0\in C^{\infty}(M)$ be any smooth function satisfying $u_0\equiv\ell w^{-1}$ in $M_{end}$, and set $f=-\Delta_{g}u_0$. Notice that \eqref{wl} implies $f\equiv 0$ in $M_{end}$. By a standard argument \cite[Lemma 3.2]{SchoenYauI},
there exists a function $u_1\in C^{\infty}(M)$ solving
\begin{equation}
\begin{cases}
\Delta_{g}u_1=f &\text{ on } M ,\\
u_1(x)=\frac{a}{r}+O_2(r^{-2}) & \text{ in } M_{end},
\end{cases}
\end{equation}
for some constant $a$. The desired unique solution of \eqref{uequation} is $u=u_0 +u_1$.

\begin{lemma}\label{lharmonic}
Let $(M,g)$ be complete and Schwarzschildian. For any linear function $\ell$ in the coordinates of $M_{end}$, there exists a unique solution $u_\ell$ of \eqref{uequation}. Moreover, all regular level sets of  $u_\ell$ are connected and noncompact with a single end modeled on $\mathbb{R}^2\setminus B_1$.
\end{lemma}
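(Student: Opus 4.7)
The existence portion I would take directly from the construction $u = u_0 + u_1$ carried out in the paragraph preceding the lemma; for uniqueness, the difference of two solutions is a harmonic function on $M$ with $O(r^{-1})$ decay at infinity, so by the strong maximum principle its global sup and inf coincide with those at infinity (both zero), forcing it to vanish identically. The substantive content of the lemma is the topology of the regular level sets $\Sigma_t := u_\ell^{-1}(t)$, which I would attack by combining the asymptotic linear behavior of $u_\ell$ with the vanishing of $H_2(M;\mathbb{Z})$ afforded by Proposition \ref{simplify}.

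To analyze the level sets near infinity (assume $\vec a := (a_1,a_2,a_3) \neq 0$, else $u_\ell \equiv 0$ and the claim is vacuous), I would differentiate the expansion in \eqref{uequation} to obtain
\begin{equation*}
\nabla u_\ell = \vec a + O(r^{-1}) \quad \text{in } M_{end},
\end{equation*}
so that $|\nabla u_\ell| \geq \tfrac{1}{2}|\vec a|$ outside a sufficiently large coordinate ball $B_R$. Since $u_\ell$ is a small perturbation of the linear function $\ell$ on $M_{end}\setminus B_R$, the implicit function theorem expresses $\Sigma_t \cap (M_{end}\setminus B_R)$ as a smooth graph over the planar region $\{\ell = t\}\cap(\mathbb{R}^3\setminus B_R)$, yielding a single connected end diffeomorphic to $\mathbb{R}^2\setminus B_1$.

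It remains to rule out any other components. If $\Sigma'$ were a compact component of $\Sigma_t$, then by $H_2(M;\mathbb{Z}) = 0$ (cf.\ Remark \ref{rem1}) it would bound a compact region $\Omega' \subset M$. The maximum principle applied to the harmonic function $u_\ell|_{\Omega'}$ with constant boundary value $t$ forces $u_\ell \equiv t$ on $\Omega'$, contradicting $\nabla u_\ell \neq 0$ on the regular level set $\Sigma'$. As for noncompact components: each must exit every compact set and therefore enter the asymptotic region, where step two exhibits only a single component; any two noncompact components would thus have to share that asymptotic piece, so there is a unique noncompact component. The argument is essentially soft, and the main point of care is ensuring the asymptotic expansion of $\nabla u_\ell$ is sharp enough to invoke the implicit function theorem cleanly; this is provided by the $O_2(r^{-2})$ remainder in \eqref{uequation} combined with standard elliptic estimates, and I do not anticipate any serious obstacle beyond this.
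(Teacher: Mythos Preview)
Your proposal is correct and follows essentially the same route as the paper's own proof: existence from the preceding construction, uniqueness via the maximum principle, ruling out compact components using $H_2(M;\mathbb{Z})=0$ together with the Dirichlet maximum principle, and the implicit function theorem in the asymptotic region to see that all noncompact components coalesce into a single graphical end. The only cosmetic difference is the order of presentation (the paper disposes of compact components first and then handles the end), and the paper phrases the asymptotic step by first observing that $\Sigma_t$ is confined to a slab $\{t-C<\ell(x)<t+C\}$ before invoking the implicit function theorem.
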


\begin{proof}
The discussion preceding the lemma establishes the existence of the solutions $u_\ell$, and uniqueness follows from the maximum principle. For such an asymptotically linear harmonic function $u_\ell$, let $t$ be a regular value of $u_\ell$ and consider the level set $\Sigma_t=u^{-1}(t)$. Suppose that there is a compact connected component $\Sigma'_{t}\subset\Sigma_t$. Notice that $\Sigma'_{t}$ is a properly embedded submanifold and is $2$-sided (has trivial normal bundle). Since $M=\mathbb{R}^3$ has trivial homology, $\Sigma'_{t}$ must bound a compact region of $M$. By uniqueness of solutions to the Dirichlet problem for harmonic functions, $u_\ell\equiv t$ on this region. However this contradicts the assumption that $t$ is a regular value. It follows that all components of $\Sigma_t$ are noncompact. Furthermore, since it is properly embedded $\Sigma_t$ is a closed subset of $M$. Thus if any component of $\Sigma_t$ stays within $M_r$, the compact region bounded by the coordinate sphere $S_r\subset M_{end}$, it must be compact which is a contradiction. We conclude that each component must extend outside $S_r$ for all $r$.

\begin{figure}
\begin{picture}(0,0)
\put(210,29){{\color{blue}{$\Sigma_1$}}}
\put(210,48){{\color{red}{$\Sigma_2$}}}
\put(210,66){$\Sigma_3$}
\end{picture}
\includegraphics[scale=.6]{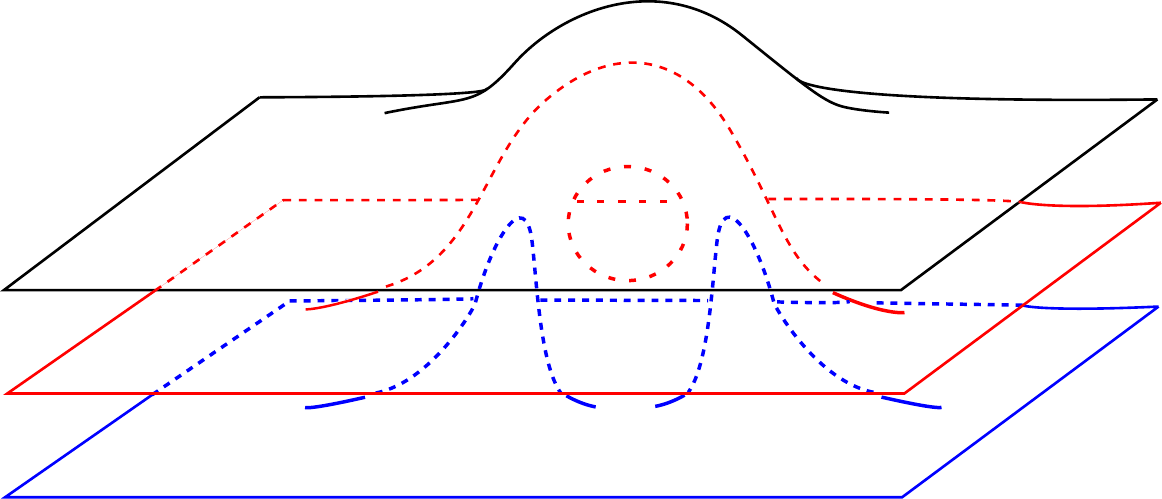}
\caption{Possible level sets of a harmonic function from Lemma  \ref{lharmonic}.}\label{pic:lemma3-1}
\end{figure}

The asymptotics of $u_{\ell}$ imply that there exists a constant $C$, such that for all sufficiently large $r$ the level set $\Sigma_t$ lies within the slab $\{x\in M\setminus M_r \mid t-C<\ell(x)<t+C\}$. More precisely, the implicit function theorem shows that $\Sigma_t$ is represented uniquely in this region as a graph over the plane $t=\ell(x)$. It follows that $\Sigma_t$ is connected and has a single end modeled on $\mathbb{R}^2\setminus B_1$.
\end{proof}

\subsection{Harmonic coordinates}\label{sec:harmoniccoords}

In the general case of an asymptotically flat 3-manifold $(M,g)$, not necessarily Schwarzschildian, consider the exterior region $M_{ext}$ associated with a given end $M_{end}$. Let $y^i$, $i=1,2,3$ denote the given asymptotically flat coordinate system in $M_{end}$. The analysis of \cite[Theorem 3.1]{Bartnik} may be appropriately modified in order to produce harmonic coordinates satisfying Neumann boundary conditions. That is, there exist functions $x^i \in C^{\infty}(M_{ext})$ satisfying
\begin{equation}\label{e:harmdecay0}
\Delta_g x^i=0 \quad\text{ on }\quad M_{ext}, \quad \partial_{\upsilon}x^i=0\quad\text{ on }\quad\partial M_{ext},\quad
|x^i-y^i|=o(|y|^{1-q})\quad\text{ as }\quad |y|\rightarrow\infty,
\end{equation}
where $q$ is the order of asymptotically flat decay in \eqref{af}. This decay is still valid for the harmonic coordinates, that is
\begin{equation}\label{e:harmdecay}
|\partial^l (g_{ij}-\delta_{ij})(x)|=O(|x|^{-q-l}),\quad\quad l=0,1,2.
\end{equation}
Harmonic coordinates are particularly well suited for studying the mass \cite{Bartnik},
and will play an important role in the computation of asymptotic boundary terms appearing in the integral inequalities of Section \ref{sec4} below.

\section{Relating Scalar Curvature to Level Set Geometry}
\label{sec4} \setcounter{equation}{0}
\setcounter{section}{4}

The purpose of this section is to obtain integral inequalities for the scalar curvature of a compact Riemannian manifold equipped with a harmonic function, building on the techniques introduced by the fourth named author in \cite{Stern}. Note that our setting is slightly different from that of \cite{Stern}, which studies closed 3-manifolds with harmonic maps to $S^1$, while we work with harmonic functions on compact manifolds with boundary where additional boundary conditions are needed. As in \cite{Stern}, the first step in obtaining the relevant identities is to apply the Gauss equations to extract scalar curvature on a regular level set of a harmonic function. Note that in the next result, the dimension is not restricted to three.

\begin{lemma}\label{lem:bochner}
Suppose that $(M,g)$ is a Riemannian manifold and $u:M\to \mathbb{R}$ is harmonic with regular level set $\Sigma$. Then, on $\Sigma$, the following identity holds
\begin{equation}\label{e:bochner}
\mathrm{Ric}(\nabla u,\nabla u)=\frac12|\nabla u|^2\left(R_g-R_\Sigma \right)+|\nabla|\nabla u||^2-\frac12|\nabla^2u|^2,
\end{equation}
where $R_g$ and $R_{\Sigma}$ denote the respective scalar curvatures.
\end{lemma}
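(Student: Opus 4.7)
The plan is to work pointwise at a point $p\in\Sigma$ using the adapted orthonormal frame $e_1,\dots,e_{n-1},\nu$, where $\nu=\nabla u/|\nabla u|$ is the unit normal and $e_1,\dots,e_{n-1}$ is an orthonormal basis of $T_p\Sigma$. The identity \eqref{e:bochner} will be obtained by combining three ingredients: the Gauss equation for $\Sigma\subset M$, a decomposition of $|\nabla^2 u|^2$ into tangential, mixed, and normal components along $\Sigma$, and the harmonic equation $\Delta_g u=0$ to relate the normal component of $\nabla^2 u$ to the mean curvature of $\Sigma$.

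First, I would apply the Gauss equation to the hypersurface $\Sigma$ with second fundamental form $A$ and mean curvature $H=\mathrm{tr}_\Sigma A$, namely
\begin{equation*}
R_\Sigma = R_g - 2\,\mathrm{Ric}(\nu,\nu)+H^2-|A|^2.
\end{equation*}
Multiplying by $\tfrac12|\nabla u|^2$ and solving for $\mathrm{Ric}(\nabla u,\nabla u)$ expresses it as $\tfrac12|\nabla u|^2(R_g-R_\Sigma)$ plus the curvature correction $\tfrac12|\nabla u|^2(H^2-|A|^2)$, so the entire problem reduces to identifying this last term with $|\nabla|\nabla u||^2-\tfrac12|\nabla^2u|^2$.

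Next I would decompose $\nabla^2u$ relative to the frame. Since $u|_\Sigma$ is constant, for tangent vectors $X,Y\in T_p\Sigma$ one has $\nabla^2u(X,Y)=-\langle\nabla_X Y,\nabla u\rangle=\pm|\nabla u|A(X,Y)$, contributing $|\nabla u|^2|A|^2$ to $|\nabla^2u|^2$. For the mixed components, $\nabla^2u(e_i,\nu)=e_i(|\nabla u|)$ because $\nabla_{e_i}\nu$ is tangent to $\Sigma$, contributing $2|\nabla^\Sigma|\nabla u||^2$. For the normal-normal component, $\nabla^2u(\nu,\nu)=\nu(|\nabla u|)$ because $\nabla_\nu\nu$ is tangent to $\Sigma$, and then the harmonic equation $\mathrm{tr}(\nabla^2u)=0$ forces $\nu(|\nabla u|)=-|\nabla u|H$. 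Summing these three contributions yields
\begin{equation*}
|\nabla^2u|^2=|\nabla u|^2|A|^2+2|\nabla^\Sigma|\nabla u||^2+|\nabla u|^2H^2.
\end{equation*}

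Finally, since $|\nabla|\nabla u||^2=|\nabla^\Sigma|\nabla u||^2+(\nu|\nabla u|)^2=|\nabla^\Sigma|\nabla u||^2+|\nabla u|^2H^2$, eliminating $|\nabla^\Sigma|\nabla u||^2$ gives
\begin{equation*}
|\nabla u|^2(|A|^2-H^2)=|\nabla^2u|^2-2|\nabla|\nabla u||^2,
\end{equation*}
and substituting this into the rearranged Gauss equation yields exactly \eqref{e:bochner}. The main place to be careful is the sign convention relating $\nabla^2u|_{T\Sigma\times T\Sigma}$ to $A$ and the use of $|\nu|\equiv 1$ to show that $\nabla_{e_i}\nu$ and $\nabla_\nu\nu$ are tangent, but both are routine; the algebraic bookkeeping described above then produces the identity in any dimension.
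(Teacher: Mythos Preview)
Your proof is correct and follows essentially the same route as the paper's: both use the traced Gauss equation $R_g-2\,\mathrm{Ric}(\nu,\nu)=R_\Sigma+|A|^2-H^2$, identify the second fundamental form with $|\nabla u|^{-1}\nabla^2_\Sigma u$, use harmonicity to get $H=-|\nabla u|^{-1}\nabla^2u(\nu,\nu)$, and then verify $|\nabla u|^2(|A|^2-H^2)=|\nabla^2u|^2-2|\nabla|\nabla u||^2$. Your decomposition of $|\nabla^2u|^2$ and $|\nabla|\nabla u||^2$ into tangential and normal pieces is slightly more explicit than the paper's compressed formula \eqref{e:sff}, but the content is identical.
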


\begin{proof}
Since $\Sigma$ is a regular level set, its unit normal is given by $\nu=\frac{\nabla u}{|\nabla u|}$. Taking two traces of the Gauss equations then yields
\begin{equation}\label{e:GC}
R_g-2\mathrm{Ric}\left(\frac{\nabla u}{|\nabla u|},\frac{\nabla u}{|\nabla u|}\right)=R_\Sigma+|II|^2-H^2,
\end{equation}
where $H$ and $II$ are the mean curvature and second fundamental form of $\Sigma$. The second fundamental form is given by $II=\frac{\nabla^2_\Sigma u}{|\nabla u|}$,
where $\nabla^2_\Sigma u$ denotes the Hessian of $u$ restricted to $T\Sigma\otimes T\Sigma$. It follows that
\begin{equation}\label{e:sff}
|II|^2=|\nabla u|^{-2}\left(|\nabla^2u|^2-2|\nabla|\nabla u||^2+[\nabla^2u(\nu,\nu)]^2\right),
\end{equation}
and because $u$ is harmonic
\begin{equation}\label{e:meancurv}
H=\mathrm{Tr}_{\Sigma}II=|\nabla u|^{-1}\left(\mathrm{Tr}_g\nabla^2 u-\nabla^2u(\nu,\nu)\right)
=-|\nabla u|^{-1}\nabla^2u(\nu,\nu).
\end{equation}
Combining equations \eqref{e:sff} and \eqref{e:meancurv} produces
\begin{align}\label{e:sff_meancurv}
|II|^2 -H^2=|\nabla u|^{-2}\left(|\nabla^2u|^2-2|\nabla|\nabla u||^2\right).
\end{align}
Inserting this into \eqref{e:GC} gives the desired result.
\end{proof}

The formula of Lemma \ref{lem:bochner} will be combined with Bochner's identity and integrated by parts over a compact manifold with boundary, while applying the coarea formula with harmonic level sets.
For a function $u:\Omega\to\mathbb{R}$ on a compact manifold $\Omega$, let $\overline{u}$ and $\underline{u}$ be the maximum and minimum values of $u$, respectively. The following computation plays a key role in both of our approaches for obtaining lower bounds on the ADM mass. We remark that related computations for $S^1$-valued harmonic maps with homogeneous Neumann condition can be found in the paper \cite{BrayStern}, where several applications to the geometry of compact $3$-manifolds are obtained.

\begin{prop}\label{prop:gen.stern}
Let $(\Omega^3,g)$ be an $3$-dimensional oriented compact Riemannian manifold with boundary decomposed into $\partial\Omega=P_1\sqcup P_2$. Let $u:\Omega\to\mathbb{R}$ be a harmonic function satisfying the Neumann condition $\partial_\upsilon u\equiv 0$ on $P_1$ and the nondegeneracy condition $|\nabla u|_{P_2}|>0$ on $P_2$. Then
\begin{align}\label{bdry.strn.0}
\int_{\underline{u}}^{\overline{u}}\left(\int_{\Sigma_{t}}
\frac{1}{2}\left(\frac{|\nabla^2u|^2}{|\nabla u|^2}+R_g\right)dA+\int_{\partial\Sigma_{t}\cap P_1}H_{P_1} \right)dt&\notag\\
\leq\int_{\underline{u}}^{\overline{u}}\left(2\pi \chi(\Sigma_{t})-\int_{\partial\Sigma_t\cap P_2} \kappa_{\partial\Sigma_t}\right)dt&
+\int_{P_2}\partial_\upsilon|\nabla u|dA,
\end{align}
where $\kappa_{\partial\Sigma_t}$ denotes the geodesic curvature of $\partial\Sigma_t\subset\Sigma_t$, $H_{P_1}$ denotes the mean curvature of $P_1$, and $\upsilon$ is the unit outer normal to $\partial\Omega$. In the case $P_1=\varnothing$, we record also the equivalent formulation
\begin{equation}\label{bdry.strn.1}
\int_{\underline{u}}^{\overline{u}}\int_{\Sigma_t}\frac{1}{2}
\left(\frac{|\nabla^2u|^2}{|\nabla u|^2}+R_g-R_{\Sigma_t}\right)dA dt\leq \int_{\partial\Omega}\partial_{\upsilon}|\nabla u|.
\end{equation}
\end{prop}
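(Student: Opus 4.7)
The plan is to combine Bochner's formula with Lemma \ref{lem:bochner} at regular points of $u$, use a regularization to handle the critical set, and then convert the resulting bulk inequality into a level-set statement via the coarea formula and Gauss-Bonnet on $\Sigma_t$. Concretely, at any point where $\nabla u\ne 0$, Bochner's identity $\tfrac12\Delta|\nabla u|^2=|\nabla^2u|^2+\mathrm{Ric}(\nabla u,\nabla u)$ combined with Lemma \ref{lem:bochner} produces
\[
\tfrac12\Delta|\nabla u|^2=\tfrac12|\nabla^2u|^2+\tfrac12|\nabla u|^2(R_g-R_{\Sigma_t})+|\nabla|\nabla u||^2.
\]

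Introducing $\phi_\varepsilon:=\sqrt{|\nabla u|^2+\varepsilon^2}$ and using the fact that $|\nabla\phi_\varepsilon|^2=\tfrac{|\nabla u|^2}{\phi_\varepsilon^2}|\nabla|\nabla u||^2\leq|\nabla|\nabla u||^2$ at regular points, together with $\tfrac12\Delta\phi_\varepsilon^2=\phi_\varepsilon\Delta\phi_\varepsilon+|\nabla\phi_\varepsilon|^2$, the identity above rewrites as
\[
\phi_\varepsilon\Delta\phi_\varepsilon\;\geq\;\tfrac12|\nabla^2u|^2+\tfrac12|\nabla u|^2(R_g-R_{\Sigma_t}),
\]
with nonnegative defect $\tfrac{\varepsilon^2}{\phi_\varepsilon^2}|\nabla|\nabla u||^2$ discarded. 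Dividing by $\phi_\varepsilon>0$, integrating over $\Omega$, applying the divergence theorem, and sending $\varepsilon\to 0$ via monotone/dominated convergence (the coarea formula being valid on the full-measure set of regular values by Sard's theorem) yields
\[
\int_{\underline{u}}^{\overline{u}}\int_{\Sigma_t}\tfrac12\Bigl(\tfrac{|\nabla^2u|^2}{|\nabla u|^2}+R_g-R_{\Sigma_t}\Bigr)dA\,dt\;\leq\;\int_{\partial\Omega}\partial_\upsilon|\nabla u|\,dA,
\]
the boundary limit being justified by $|\nabla u|_{P_2}|>0$. This is exactly \eqref{bdry.strn.1} in the case $P_1=\varnothing$.

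To derive \eqref{bdry.strn.0} in general, I rewrite the $P_1$ portion of the boundary integral. The Neumann condition forces $\nabla u$ tangent to $P_1$ and hence $\Sigma_t\perp P_1$; differentiating $\partial_\upsilon u\equiv 0$ in a direction $X$ tangent to $P_1$ and using the symmetry of $\nabla^2u$ gives $\nabla^2u(X,\upsilon)=-II_{P_1}(X,\nabla u)$, and taking $X=\nabla u$ produces $\partial_\upsilon|\nabla u|=-|\nabla u|\,II_{P_1}(\nu,\nu)$ where $\nu=\nabla u/|\nabla u|$. Decomposing $H_{P_1}=II_{P_1}(\nu,\nu)+II_{P_1}(T,T)$ in the orthonormal frame $\{T,\nu\}$ of $TP_1$ at points of $C_t:=\partial\Sigma_t\cap P_1$ and identifying $II_{P_1}(T,T)$ with the geodesic curvature $\kappa_{\partial\Sigma_t}$ of $C_t\subset\Sigma_t$, this simplifies to $\partial_\upsilon|\nabla u|=|\nabla u|(\kappa_{\partial\Sigma_t}-H_{P_1})$ on $P_1$. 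Applying coarea on $P_1$ (noting $|\nabla_{P_1}u|=|\nabla u|$) converts $\int_{P_1}\partial_\upsilon|\nabla u|\,dA$ into $\int_{\underline u}^{\overline u}\int_{C_t}(\kappa_{\partial\Sigma_t}-H_{P_1})\,ds\,dt$; combining with Gauss-Bonnet $\int_{\Sigma_t}\tfrac{R_{\Sigma_t}}{2}dA=2\pi\chi(\Sigma_t)-\int_{\partial\Sigma_t}\kappa_{\partial\Sigma_t}$ and the decomposition $\partial\Sigma_t=C_t\sqcup(\partial\Sigma_t\cap P_2)$, the $\int_{C_t}\kappa_{\partial\Sigma_t}$ contributions cancel and \eqref{bdry.strn.0} follows.

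The main technical obstacle is the $\varepsilon\to 0$ passage in the bulk integrals: since $|\nabla u|$ may vanish in $\Omega$, the limiting integrand $|\nabla^2u|^2/|\nabla u|$ is only \emph{a priori} defined almost everywhere, and one must argue either that the LHS is integrable or that the inequality survives with a possibly infinite LHS. The nonnegative defect discarded at the regularization step is precisely what absorbs any ill behavior near the critical set and preserves the direction of inequality. A secondary, purely mechanical source of care is sign-chasing in the boundary analysis: the conventions for $II_{P_1}$, $H_{P_1}$, $\kappa_{\partial\Sigma_t}$, and the outer normal $\upsilon$ must all be kept in compatible orientations so that the final cancellation produces \eqref{bdry.strn.0}.
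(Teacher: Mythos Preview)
Your approach is essentially the paper's: regularize with $\phi_\varepsilon=\sqrt{|\nabla u|^2+\varepsilon^2}$, combine Bochner with Lemma~\ref{lem:bochner}, integrate by parts, apply coarea and Gauss--Bonnet, and rewrite the $P_1$ boundary term via the Neumann condition exactly as you outline (the paper obtains the same identity $\partial_\upsilon\phi=-\phi^{-1}|\nabla u|^2(H_{P_1}-\kappa_{\partial\Sigma_t})$).

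The one point where your writeup is looser than the paper is the $\varepsilon\to 0$ passage. Your appeal to ``monotone/dominated convergence'' does not apply as stated, since the level-set integrand carries the sign-indefinite factor $R_{\Sigma_t}$ multiplied by $|\nabla u|/\phi_\varepsilon$, and Gauss--Bonnet cannot be invoked until that factor has been removed. The paper handles this by first fixing a closed set $\mathcal{B}\subset[\underline u,\overline u]$ of regular values and an open neighborhood $\mathcal{A}\supset[\underline u,\overline u]\setminus\mathcal{B}$ of the critical values; on $u^{-1}(\mathcal{B})$ the gradient is bounded below so $|\nabla u|/\phi_\varepsilon\to 1$ uniformly and one may safely take $\varepsilon\to 0$ and then apply Gauss--Bonnet, while on $u^{-1}(\mathcal{A})$ the crude bound $\Delta\phi_\varepsilon\ge -\|\mathrm{Ric}\|\,|\nabla u|$ gives a contribution controlled by $C\int_{\mathcal{A}}\mathcal{H}^2(\Sigma_t)\,dt$, which vanishes as $|\mathcal{A}|\to 0$. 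This two-step limit (first $\varepsilon\to 0$ on $\mathcal{B}$, then $|\mathcal{A}|\to 0$) is the mechanism that actually ``absorbs the ill behavior near the critical set,'' rather than the discarded defect term you point to.
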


\begin{proof}
Let $\varepsilon>0$ and consider $\phi=\sqrt{|\nabla u|^2+\varepsilon}$.
By Bochner's identity
\begin{align}\label{e:phidelta1}
\begin{split}
\Delta_{g}\phi=&\frac{\Delta_{g}|\nabla u|^2}{2\phi}-\frac{|\nabla|\nabla u|^2|^2}{4\phi^3}\\
=&\phi^{-1}\left(|\nabla^2 u|^2+\mathrm{Ric}(\nabla u,\nabla u)-\phi^{-2} |\nabla u|^2|\nabla|\nabla u||^2\right).
\end{split}
\end{align}
It follows that on a regular level set $\Sigma$, Lemma \ref{lem:bochner} may be applied to find
\begin{equation}\label{e:phidelta2}
\Delta_{g}\phi\geq\frac12 \phi^{-1}\left(|\nabla^2 u|^2+|\nabla u|^2(R_g-R_{\Sigma})\right).
\end{equation}

Let $\mathcal{A}\subset [\underline{u},\overline{u}]$ be an open set containing the critical values of $u$, and denote the complementary closed set by $\mathcal{B}\subset[\underline{u},\overline{u}]$. Note that, by virtue of the boundary conditions for $u$, $\mathcal{A}$ also contains all critical values for the restriction $u|_{\partial\Omega}$ of $u$ to the boundary.

Now, integration by parts yields
\begin{equation}\label{e:intbyparts}
\int_{\partial\Omega}\partial_\upsilon \phi dA=\int_{\Omega}\Delta_g \phi dV=\int_{u^{-1}(\mathcal{A})}\Delta_g \phi dV+\int_{u^{-1}(\mathcal{B})}\Delta_g \phi dV.
\end{equation}
In order to control the integral over $u^{-1}(\mathcal{A})$, observe that \eqref{e:phidelta1} and Cauchy-Schwarz give the estimate
\begin{equation}
\Delta_{g} \phi
\geq\phi^{-1}\mathrm{Ric}(\nabla u,\nabla u)
\geq-\parallel\mathrm{Ric}\parallel |\nabla u|.
\end{equation}
Applying the coarea formula to $u:u^{-1}(\mathcal{A})\to\mathcal{A}$ then produces
\begin{equation}\label{e:Apart}
-\int_{u^{-1}(\mathcal{A})}\Delta_g \phi dV\leq\int_{u^{-1}(\mathcal{A})}\parallel\mathrm{Ric}\parallel |\nabla u|dV
\leq C \int_{t\in\mathcal{A}}\mathcal{H}^2(\Sigma_t)dt,
\end{equation}
for some constant $C$ independent of $\varepsilon$ and the choice of $\mathcal{A}$.
In addition, applying the coarea formula to $u:u^{-1}(\mathcal{B})\to\mathcal{B}$ in conjunction with \eqref{e:phidelta2} gives
\begin{equation}
\int_{u^{-1}(\mathcal{B})}\Delta_g \phi dV
\geq\frac{1}{2}\int_{t\in\mathcal{B}}\int_{\Sigma_t}\phi^{-1}\left[\frac{|\nabla^2 u|^2}{|\nabla u|^2}+(R_g-R_{\Sigma_t})\right]dAdt.
\end{equation}
Putting this all together yields
\begin{equation}\label{p0o9}
\frac{1}{2}\int_{t\in\mathcal{B}}\int_{\Sigma_t}\phi^{-1}|\nabla u|\left[\frac{|\nabla^2 u|^2}{|\nabla u|^2}+(R_g-R_{\Sigma_t})\right]dAdt
\leq\int_{\partial\Omega}\partial_\upsilon \phi dA
+C \int_{t\in\mathcal{A}}\mathcal{H}^2(\Sigma_t)dt.
\end{equation}

Next, we employ the homogeneous Neumann condition $\partial_{\upsilon}u\equiv 0$ to rewrite the boundary integral $\int_{P_1}\partial_{\upsilon}\phi$. Indeed, note that, away from critical points of $u$ along $P_1$, we have
\begin{equation}\label{e:boundarytermn}
\partial_\upsilon \phi=\phi^{-1}\langle \nabla_{\nabla u}\nabla u,\upsilon\rangle=-\phi^{-1}\langle \nabla u,\nabla_{\nabla u}\upsilon\rangle,
\end{equation}
where in the last line the Neumann condition was used. Writing $\nu=\frac{\nabla u}{|\nabla u|}$ and continuing to use the homogeneous Neumann condition, a brief calculation shows that
\begin{equation}
\langle \nu, \nabla_{\nu}\upsilon\rangle=II_{\partial \Omega}(\nu,\nu)=H_{P_1}-\kappa_{\partial\Sigma_t},
\end{equation}
so we can rewrite \eqref{e:boundarytermn} as
\begin{equation}
\partial_\upsilon \phi=-\phi^{-1}|\nabla u|^2(H_{P_1}-\kappa_{\partial\Sigma_t}).
\end{equation}
In particular, applying the coarea formula for the restriction $u|_{P_1}$--and using the homogeneous Neumann condition to see that $|\nabla u|_{P_1}|=|\nabla u|$ along $P_1$--we find that
\begin{equation}
\int_{P_1}\partial_{\upsilon}\phi dA=-\int_{t\in \mathcal{B}}\int_{\partial \Sigma_t \cap P_1}\phi^{-1}|\nabla u|(H_{P_1}-\kappa_{\partial\Sigma_t})+\int_{t\in \mathcal{A}}\int_{\partial \Sigma_t\cap P_1}|\nabla u|^{-1}\phi^{-1}\langle \nabla_{\nabla u}\nabla u,\upsilon\rangle.
\end{equation}
Since
\begin{equation}
|\nabla u|^{-1}\phi^{-1}|\langle \nabla_{\nabla u}\nabla u,\upsilon\rangle| \leq |II_{P_1}|\leq C,
\end{equation}
it follows that
\begin{equation}\label{p1_comp}
\int_{P_1}\partial_{\upsilon}\phi dA\leq -\int_{t\in \mathcal{B}}\int_{\partial \Sigma_t \cap P_1}\phi^{-1}|\nabla u|(H_{P_1}-\kappa_{\partial\Sigma_t})+C\int_{t\in \mathcal{A}}\mathcal{H}^1(\partial \Sigma_t\cap P_1).
\end{equation}

Apply \eqref{p1_comp} in \eqref{p0o9} to obtain
\begin{align}
\frac{1}{2}\int_{t\in\mathcal{B}}\int_{\Sigma_t}\frac{|\nabla u|}{\phi}\left(\frac{|\nabla^2 u|^2}{|\nabla u|^2}+R_g\right)dAdt
\leq &\int_{t\in \mathcal{B}}\left(\frac{1}{2}\int_{\Sigma_t}\frac{|\nabla u|}{\phi}R_{\Sigma_t}dA+\int_{\partial\Sigma_t\cap P_1}\frac{|\nabla u|}{\phi}(\kappa_{\partial \Sigma_t}-H_{P_1})\right)dt\notag\\
&+\int_{P_2}\partial_{\upsilon}\phi dA+C\int_{t\in A}\left(\mathcal{H}^2(\Sigma_t)+\mathcal{H}^1(\partial\Sigma_t \cap P_1)\right).
\end{align}
Observe that $|\nabla u|$ is uniformly bounded from below on $u^{-1}(\mathcal{B})$, since $\mathcal{B}$ is a closed subset of the regular values of $u$. Recalling that $\phi=(|\nabla u|^2+\varepsilon)^{1/2}$, we may take $\varepsilon\to 0$ in the preceding inequality to conclude that
\begin{align}
\begin{split}
\frac{1}{2}\int_{t\in\mathcal{B}}\int_{\Sigma_t}\left(\frac{|\nabla^2 u|^2}{|\nabla u|^2}+R_g\right)dAdt
\leq &\int_{t\in \mathcal{B}}\left(\frac{1}{2}\int_{\Sigma_t}R_{\Sigma_t}dA+\int_{\partial\Sigma_t\cap P_1}(\kappa_{\partial \Sigma_t}-H_{P_1})\right)dt\\
&+\int_{P_2}\partial_{\upsilon}|\nabla u|dA+C\int_{t\in A}\left(\mathcal{H}^2(\Sigma_t)+\mathcal{H}^1(\partial\Sigma_t \cap P_1)\right)\\
=&\int_{t\in \mathcal{B}}\left(2\pi \chi(\Sigma_t)-\int_{\partial \Sigma_t\cap P_2}\kappa_{\partial \Sigma_t}-\int_{\partial \Sigma_t\cap P_1}H_{P_1}\right)dt\\
&+\int_{P_2}\partial_{\upsilon}|\nabla u|dA+C\int_{t\in A}\left(\mathcal{H}^2(\Sigma_t)+\mathcal{H}^1(\partial\Sigma_t \cap P_1)\right),
\end{split}
\end{align}
where in the second step we have applied the Gauss-Bonnet theorem to $\Sigma_t$.

Finally, by Sard's theorem, we may take the measure $|\mathcal{A}|$ of $\mathcal{A}$ to be arbitrarily small. Since
\begin{equation}
t\mapsto \mathcal{H}^2(\Sigma_t)+\mathcal{H}^1(\Sigma_t\cap P_1)
\end{equation}
is integrable over $[\underline{u},\overline{u}]$ by the coarea formula, taking $|\mathcal{A}|\to 0$ in the preceding inequality yields the desired conclusion.
\end{proof}

\begin{remark} Note that for our applications in Section \ref{sec6}, the boundary component $P_2$ in Proposition \ref{prop:gen.stern} will be a piecewise smooth surface, diffeomorphic to the boundary $\partial (D^2\times [0,1])$ of the solid cylinder $D^2\times [0,1]$. However, our harmonic function $u$ in this case will be constant on the disks $D^2\times \{0\}$ and $D^2\times \{1\}$, with nonvanishing gradient along the cylindrical portion $S^1\times [0,1]$, and it is easy to check that the preceding argument carries over to this case without difficulty.
\end{remark}

\section{The Schwarzschildian Approach}
\label{sec5} \setcounter{equation}{0}
\setcounter{section}{5}

In this section we prove Theorem \ref{pmt} by establishing Theorem \ref{scpmt} and applying the Schwarzschildian reduction of Proposition \ref{simplify}. Unless stated otherwise, in this section $(M,g)$ will denote a 3-dimensional Schwarzschildian manifold.

\subsection{Connectivity of level sets}
Consider a coordinate sphere $S_r \subset M_{end}$ and let $M_r$ be the compact component of $M\setminus S_r$. In order to apply the identity \eqref{bdry.strn.1} to $\Omega=M_r$, a computation of the Gauss curvature piece is required, which is given in Proposition \ref{geodesiccurvature} below. Before proceeding to this calculation,  properties concerning the topology of regular level sets in $M_r$ will be recorded. Let $u$ be an asymptotically linear harmonic function as in Lemma \ref{lharmonic}, and for $t\in\mathbb{R}$, $r>0$ set $\Sigma_t^r=\Sigma_t\cap M_r$.

\begin{lemma}\label{l:connected}
Let $(M,g)$ and $u$ be as above. There are constants $r_0>0$ and $c_0>0$ such that for all $r\geq r_0$ and $t\in[-r+c_0,r-c_0]$ with $t$ a regular value, $\Sigma_t^r$ is connected with boundary $\partial\Sigma_t^r= S^1$.
\end{lemma}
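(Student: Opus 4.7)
My approach combines the asymptotic graph description of $\Sigma_t$ from Lemma \ref{lharmonic} with the global connectedness of $\Sigma_t$ established there. The plan is: (a) for large $r$ and $t$ in the stated range, show that $\Sigma_t \cap S_r$ is a single embedded circle transverse to $S_r$; (b) show that every connected component of $\Sigma_t^r$ must meet $S_r$; (c) conclude from (a) and (b) that $\Sigma_t^r$ is connected.

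For (a), the expansion \eqref{uequation} with $w = 1+\tfrac{m}{2r}$ gives $u - \ell = \ell(w^{-1}-1) + a/r + O_2(r^{-2})$, so in the asymptotic coordinates $u - \ell$ is uniformly bounded on $M_{end}$ and $\nabla u - \nabla \ell = O(r^{-1})$. Fix $c_0$ strictly larger than $\sup_{M_{end}}|u-\ell|$ plus a uniform constant absorbing the curvature of $S_r$. Then for $|t|\leq r-c_0$ the plane $\{\ell = t\}$ meets $S_r$ transversely in a circle, and the implicit function theorem applied to $u-t$ in the asymptotic chart represents $\Sigma_t$ as a small normal graph over this plane outside a compact set. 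Transversality between $\Sigma_t$ and $S_r$ is then inherited from that of the plane, so $\partial \Sigma_t^r = \Sigma_t \cap S_r$ is a single embedded circle close to the great circle $\{\ell = t\}\cap S_r$.

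For (b), let $C_1,\ldots,C_k$ be the connected components of $\Sigma_t^r$, each a compact surface in $M_r$. If $C_i \cap S_r = \varnothing$ then $C_i \subset M_r^\circ$; the set $C_i$ is simultaneously closed in $\Sigma_t$ (being closed in the closed set $\Sigma_t \cap M_r$) and open in $\Sigma_t$ (any point has a connected $\Sigma_t$-neighborhood inside $M_r^\circ$ which, by maximality of the component, must lie in $C_i$). Hence $C_i$ is clopen in the connected surface $\Sigma_t$ and therefore equals it, contradicting the compactness of $C_i$ versus the noncompactness of $\Sigma_t$. For (c), we have $\partial \Sigma_t^r = \bigsqcup_{i=1}^k \partial C_i$, which by (a) is a single circle, while each $\partial C_i$ is a nonempty closed $1$-submanifold. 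This forces $k=1$ and $\partial \Sigma_t^r \cong S^1$.

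The main technical step is the uniform-in-$t$ implicit function argument in (a). I expect it to follow directly from differentiating the expansion \eqref{uequation} and invoking the $O_2(r^{-2})$ remainder, yielding a constant $c_0$ depending only on $m$, $a$, and $\ell$ (under the implicit normalization $|\nabla \ell| = 1$ which makes the bound $|t|\leq r-c_0$ sharp).
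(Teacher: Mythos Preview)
Your proposal is correct and reaches the same conclusion as the paper, but the route in part (a) is genuinely different. Both arguments rely on Lemma \ref{lharmonic} for the global connectedness and single-end structure of $\Sigma_t$, and both finish via ``every component of $\Sigma_t^r$ has nonempty boundary in $S_r$, and the total boundary is one circle, so there is one component.'' The divergence is in how the single-circle claim is obtained. The paper first proves transversality of $\Sigma_t$ and $S_r$ by an explicit angle computation (showing $|\cos\theta|\leq 1-r^{-1}+O(r^{-2})$ for $|x|\leq r-c_*$), which a priori yields only a finite collection $\gamma_1,\dots,\gamma_p$ of boundary circles; it then rules out $p\geq 2$ by a tangency--contradiction argument, using that transversality holds at \emph{every} radius $r'\geq r$, so any bounded piece of $\Sigma_t$ lying outside $M_r$, or any path in the unbounded piece joining two distinct $\gamma_i$, would have to touch some larger $S_{r'}$ tangentially. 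You instead exploit the graph representation of $\Sigma_t$ in the end to identify $\Sigma_t\cap S_r$ directly with the planar curve $\{f_t(y,z)^2+y^2+z^2=r^2\}$ and argue this is a single embedded circle.

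Your approach is more direct and avoids the tangency arguments entirely; the price is that the ``single circle close to the great circle'' assertion needs a short justification you only sketch (e.g.\ radial monotonicity of $F(y,z)=f_t^2+y^2+z^2$ for $\rho\gtrsim 1$, together with $c_0>\sup|u-\ell|$ forcing $\rho^2\gtrsim r$ on the intersection). One small caution on wording: the graph is not ``small'' in $C^0$---the displacement $f_t-t$ is $O(1)$---only its gradient is $O(r^{-1})$, and it is the latter (equivalently $|\nabla u-\nabla\ell|=O(r^{-1})$) that drives the inherited transversality when the reference angle between $\{\ell=t\}$ and $S_r$ is itself only $\sim r^{-1/2}$ near the edge of the allowed $t$-range. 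The paper's explicit angle computation has the side benefit of being reused verbatim in the geodesic-curvature calculation of Proposition \ref{geodesiccurvature}.
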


\begin{proof}
Let $\ell=a_i x^i$ be the nontrivial linear function on $M_{end}$ to which $u$ converges.
By performing an orthogonal transformation if necessary, noting that this does not disturb the Schwarzschild asymptotics of $g$, it may be assumed that $\ell=a_1 x^1$ for some $a_1\neq 0$. Since $a_1^{-1} u$ is harmonic and has the same level sets as $u$, we may assume without loss of generality that $\ell=x^1$. In what follows $x^1$ will be denoted by $x$ for convenience.

The first step is to show that $\Sigma_t$ transversely intersects $S_r\subset M_{end}$, away from the north and south pole on the $x$-axis. More precisely,
we claim that there exist $r_0>0$ and $c_0>0$ such that for $r\geq r_0$ and $|t|\leq r-c_0$, $\Sigma_t$ transversely intersects $S_r$. To see this observe that using
\eqref{ux}, \eqref{uy}, and \eqref{uz} yields
\begin{equation}\label{e:angle1}
\delta(\nabla u,\partial_r)
=\frac{x}{r}\left(1+\frac{m}{2r}\right)^{-1}
+\frac{mx}{2r^2}\left(1+\frac{m}{2r}\right)^{-2}+O(r^{-2})
=\frac{x}{r}+O(r^{-2}),
\end{equation}
and
\begin{equation}
|\nabla u|_{\delta}=1-\frac{m}{2r}+\frac{mx^2}{2r^3}+O(r^{-2}),\quad\quad\quad
|\partial_r|_{\delta}=1.
\end{equation}
Therefore
\begin{equation}
\frac{\delta(\nabla u,\partial_r)}{|\nabla u|_{\delta}|\partial_r|_{\delta}}
=\frac{x}{r}\left(1+\frac{m}{2r}-\frac{mx^2}{2r^3}\right)+O(r^{-2}),
\end{equation}
so that for $|x|\leq r-c_{*}$ with appropriately chosen $c_* >0$ we have
\begin{equation}\label{nnnn}
|\cos\theta|=\frac{|\delta(\nabla u,\partial_r)|}{|\nabla u|_{\delta}|\partial_r|_{\delta}}\leq
1-\frac{1}{r}+O(r^{-2}).
\end{equation}
Here $\theta$ represents the angle between $\nabla u$ and $\partial_r$, which stays away from zero for large $r$. The desired claim now follows since $t=x+O(1)$ on $\Sigma_t \cap M_{end}$.

Now let $r\geq r_0$ and $|t|\leq r-c_0$ so that $\Sigma_t$ intersects $S_r$ transversely. Additionally, suppose that $t$ is a regular value of $u$. Since $\Sigma_t\cap S_r$ is transverse and nonempty, it consists of a finite number of disjoint embedded circles $\gamma_1,\dots,\gamma_p$. Since, by Lemma \ref{lharmonic}, $\Sigma_t$ is connected and noncompact with only one end, removing the circles yields the decomposition
\begin{equation}
\Sigma_t\setminus\cup_{i=1}^p\gamma_i=U\sqcup \mathcal{C},
\end{equation}
where $U$ is unbounded and connected, and $\overline{\mathcal{C}}$ is bounded and compact. Evidently $\Sigma_t^r\subset \overline{\mathcal{C}}$. If $\overline{\mathcal{C}}\neq \Sigma_t^r$, then there is a path component $\mathcal{C}'\subset \overline{\mathcal{C}}$ which lies outside of $M_r$. Since $\mathcal{C}'$ is compact, there is a largest $r'>r$ so that $S_{r'}\cap \mathcal{C}'\neq\emptyset$, see Figure \ref{pic:lemma5-1}. In this intersection, $S_{r'}$ is tangential to $\Sigma_t$. This, however, contradicts the transversality established above.
We conclude that $\Sigma_t^r=\mathcal{C}$.

\begin{figure}[hbt!]
\begin{picture}(0,0)
\put(200,25){\Large{$\Sigma_t^r$}}
\put(310,17){\color{red}{$S_{r}$}}
\put(330,55){\color{blue}{$S_{r'}$}}
\put(350,107){$\mathcal{C}'$}
\put(345,110){\vector(-4,-1){55}}
\end{picture}
\includegraphics[scale=.8]{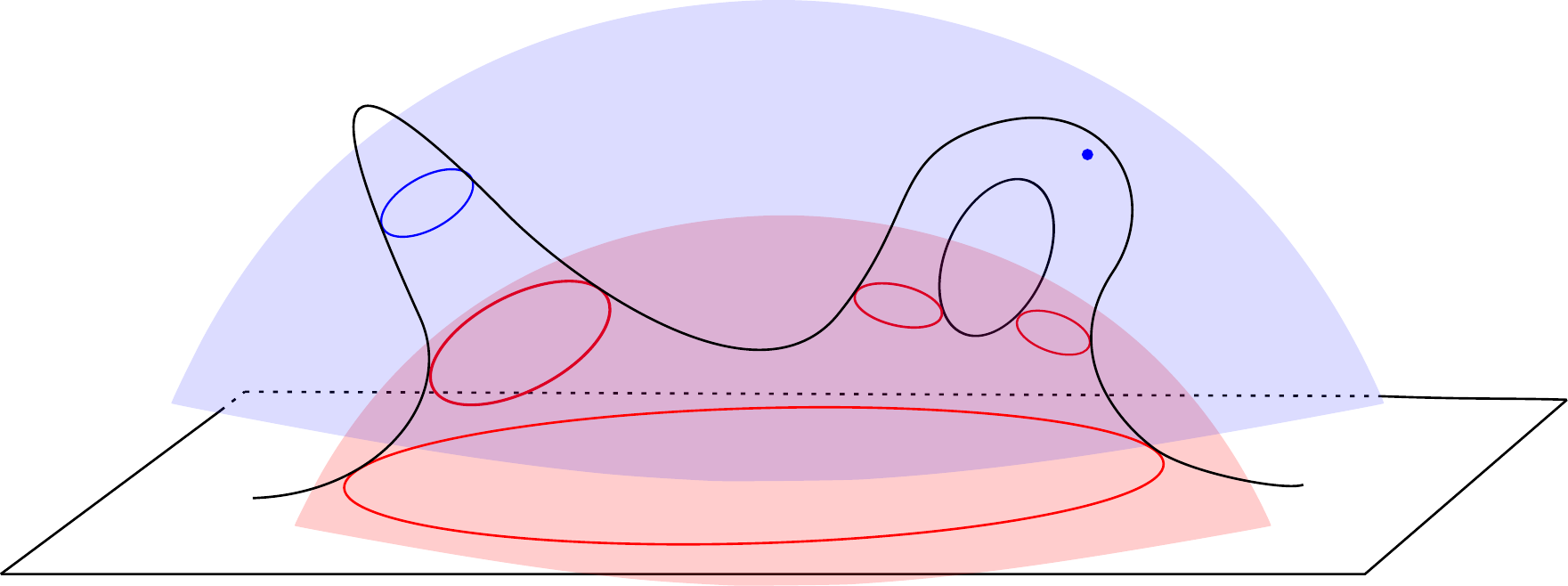}
\caption{The case where $\overline{\mathcal{C}}\neq\Sigma^r_t$ in the proof of Lemma \ref{l:connected}.}\label{pic:lemma5-1}
\end{figure}

Now assume that $\partial\Sigma_t^r$ is disconnected, so that there are at least two distinct circles $\gamma_1$ and $\gamma_2$. Since $U$ is connected, there is a path in $\overline{U}$ from $\gamma_1$ to $\gamma_2$. Let $r'>r$ be the smallest radius so that there is such a path $\sigma$ from $\gamma_1$ to $\gamma_2$ which lies entirely within $\Sigma_t^{r'}$, see Figure \ref{pic:lemma5-1-1}. Since $r'$ is the smallest such radius, $\sigma$ must intersect $S_{r'}$. At the intersection $\Sigma_t$ will be tangential to $S_{r'}$ since all perturbations of $\sigma$, within $\Sigma_t$ and supported on this intersection, either push $\sigma$ outside of $\Sigma_t^{r'}$ or possess a point of tangency. This again contradicts transversality and we conclude that $\partial\Sigma_t^r$ is connected. Since $\Sigma_t^r$ has no closed components, all points in $\Sigma_t^r$ can be connected to its boundary and we conclude that $\Sigma_t^r$ itself is connected.
\end{proof}

\begin{figure}[hbt!]
\begin{picture}(0,0)
\put(180,47){\large{$\Sigma_t^r$}}
\put(90,70){$\gamma_1$}
\put(98,65){\vector(1,-2){13}}
\put(120,100){$\gamma_2$}
\put(134,99){\vector(3,-1){38}}
\put(318,28){$\sigma$}
\put(115,13){{\color{red}{$S_r$}}}
\put(80,47){{\color{blue}{$S_{r'}$}}}
\end{picture}
\includegraphics[scale=.6]{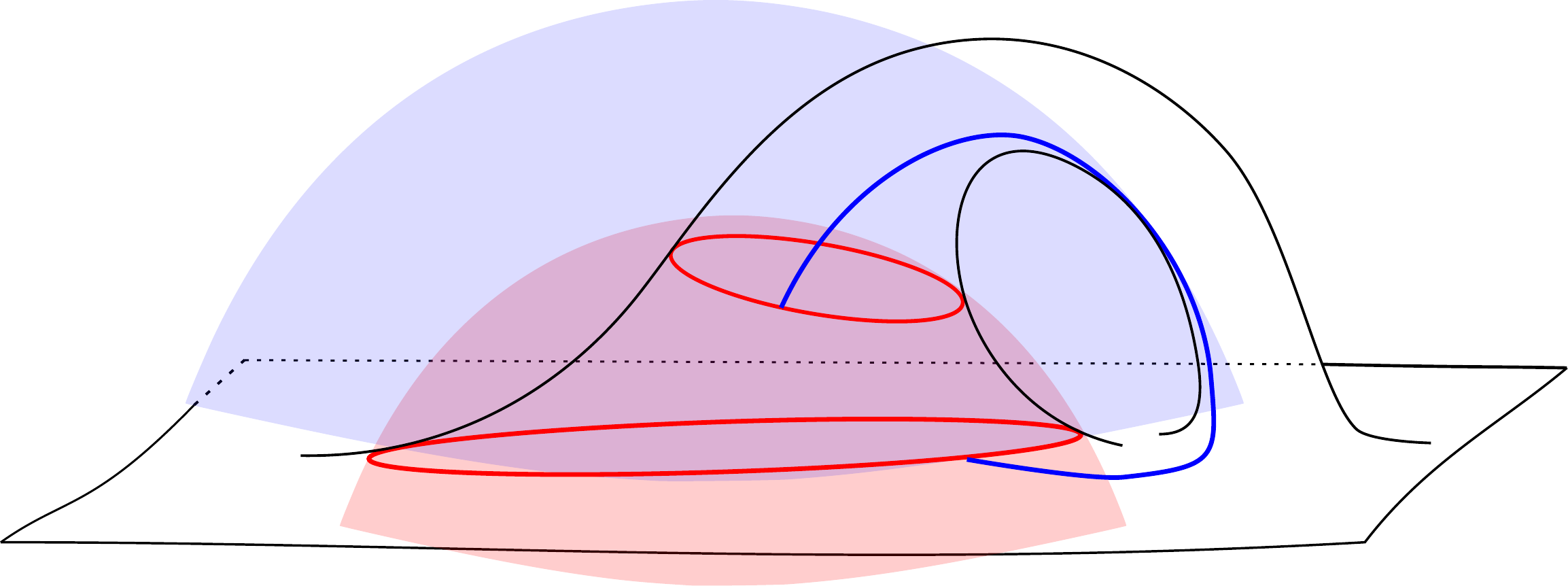}
\caption{The argument showing $\partial\Sigma_t^r$ is connected in Lemma \ref{l:connected}.}\label{pic:lemma5-1-1}
\end{figure}

\subsection{The Gaussian curvature}
The Gaussian curvature integral appearing in formula \eqref{bdry.strn.1} in Proposition \ref{prop:gen.stern} will now be computed.

\begin{prop}\label{geodesiccurvature}
Suppose that $(M,g)$ is complete and Schwarzschildian.
Let $u$ be a harmonic function of Lemma \ref{lharmonic} which is asymptotic to a linear function $\ell$, and let $\overline{u}$ and $\underline{u}$ denote the maximum and minimum values of $u$ within $M_r$, respectively. Then
\begin{equation}\label{ty67}
\int_{\underline{u}}^{\overline{u}}
\int_{\Sigma_t^r}K dA dt\leq m\omega +O(r^{-1})
\end{equation}
for some constant $\omega\in(0,\infty)$ independent of $r$, where $K$ is the Gaussian curvature of $\Sigma_t^r$.
\end{prop}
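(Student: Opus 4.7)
The plan is to apply the Gauss--Bonnet theorem fiberwise on each slice $\Sigma_t^r$ and then express the resulting boundary geodesic-curvature integral as a single integral over $S_r$ via the coarea formula, which can be computed directly using the Schwarzschild form of $g$ in $M_{end}$.

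By Lemmas \ref{lharmonic} and \ref{l:connected}, for almost every regular value $t\in[-r+c_0,\,r-c_0]$ the slice $\Sigma_t^r$ is a compact, connected, oriented surface whose boundary is a single embedded circle in $S_r$, so $\chi(\Sigma_t^r)\le 1$ and Gauss--Bonnet gives
\begin{equation*}
\int_{\Sigma_t^r}K\,dA \;\le\; 2\pi - \int_{\partial\Sigma_t^r}\kappa_{\partial\Sigma_t^r}\,ds.
\end{equation*}
From the asymptotic form $u = x^1 w^{-1}+O_2(r^{-1})$ with $w=1+m/(2r)$ and the maximum principle on $M_r$, one obtains $\overline u-\underline u = 2r - m + O(r^{-1})$. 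On the complementary polar slabs $|t|>r-c_0$ (of fixed total $t$-length $2c_0$), the ambient sectional curvatures of $g$ and the principal curvatures of $\Sigma_t$ in $M_{end}$ are all $O(r^{-1})$, so a crude estimate shows these slabs contribute $O(r^{-1})$ to $\int\int K\,dA\,dt$. Integrating the Gauss--Bonnet inequality in $t$ then yields
\begin{equation*}
\int_{\underline u}^{\overline u}\int_{\Sigma_t^r} K\,dA\,dt \;\le\; 4\pi r - 2\pi m - \int_{\underline u}^{\overline u}\int_{\partial\Sigma_t^r}\kappa_{\partial\Sigma_t^r}\,ds\,dt + O(r^{-1}).
\end{equation*}

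Next, by the coarea formula applied to $u|_{S_r}:S_r\to\mathbb{R}$,
\begin{equation*}
\int_{\underline u}^{\overline u}\int_{\partial\Sigma_t^r}\kappa_{\partial\Sigma_t^r}\,ds\,dt \;=\; \int_{S_r}\kappa(p)\,|\nabla^{S_r} u|(p)\,dA_{S_r}(p),
\end{equation*}
where $\kappa(p)$ denotes the geodesic curvature of $\partial\Sigma_{u(p)}^r\subset\Sigma_{u(p)}^r$ at $p$. At a transverse intersection, the standard orthogonal decomposition into the frame $\{\tau,\eta_{\Sigma},\nu\}$ gives
\begin{equation*}
\kappa \;=\; \frac{II_{S_r}(\tau,\tau)+\cos\theta\,II_{\Sigma_{u(p)}}(\tau,\tau)}{\sin\theta},
\end{equation*}
with $\tau$ the unit tangent to $\partial\Sigma_{u(p)}^r$ and $\theta$ the angle between the outer unit normals to $S_r$ and $\Sigma_{u(p)}$, controlled away from zero on the non-polar part of $S_r$ by \eqref{nnnn}. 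Under $g=w^4\delta$ and using the asymptotic form of $u$, each of $II_{S_r}$, $II_{\Sigma_t}$, $\sin\theta$, $|\nabla^{S_r} u|$, and $dA_{S_r}$ admits an explicit expansion in $r^{-1}$; the Euclidean ($m=0$) limit of the sphere integral evaluates to exactly $4\pi r$ (matching the flat Gauss--Bonnet calculation of Euclidean half-spaces meeting round spheres), and the next-order correction takes the form $Cm$ for a finite constant $C$. Substitution into the previous display yields the proposition with any $\omega>0$ satisfying $\omega\ge-(2\pi+C)$.

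The main obstacle is the asymptotic expansion in the Schwarzschild end to the required order, and in particular the treatment of the polar caps near $x^1=\pm r$ where $\sin\theta\to 0$. There $|\nabla^{S_r} u|$ vanishes to the same order as $\sin\theta$, so the integrand $\kappa|\nabla^{S_r} u|$ remains bounded, and since by \eqref{nnnn} the degenerate region occupies only $O(1/r)$ of the solid angle of $S_r$, it contributes only $O(r^{-1})$ to the sphere integral and can be absorbed into the error.
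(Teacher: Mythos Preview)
Your strategy matches the paper's: apply Gauss--Bonnet on each $\Sigma_t^r$, bound $\chi(\Sigma_t^r)\le 1$ via Lemma~\ref{l:connected}, control the polar caps separately, and reduce everything to an asymptotic computation of the total geodesic curvature of $\partial\Sigma_t^r\subset S_r$. Your repackaging of that last step via the coarea formula on $S_r$ and the decomposition $\kappa=(II_{S_r}(\tau,\tau)\pm\cos\theta\,II_{\Sigma}(\tau,\tau))/\sin\theta$ is a legitimate alternative to the paper's direct parametrization of $\partial\Sigma_t^r$, and your consistency check that the Euclidean ($m=0$) contribution is exactly $4\pi r$ is correct.

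The genuine gap is that you never carry out the $O(m)$ computation. You assert that ``the next-order correction takes the form $Cm$ for a finite constant $C$'' and then claim the proposition follows ``with any $\omega>0$ satisfying $\omega\ge-(2\pi+C)$.'' But the proposition asserts $\omega\in(0,\infty)$, and for this you need $-(2\pi+C)>0$; otherwise, when $m<0$, the inequality $\int\!\!\int K\le -(2\pi+C)m$ gives a \emph{nonnegative} right-hand side and cannot be rewritten as $m\omega$ with $\omega>0$. Since the entire purpose of the proposition is to feed into the proof of $m\ge 0$, the case $m<0$ is precisely the one that must be handled, and it is exactly there that your argument is silent. The paper confronts this head-on: it explicitly parametrizes $\partial\Sigma_t^r$, expands $\alpha'$, $\tilde\nu$, $\nabla_{\alpha'}\alpha'$ and the Christoffel symbols, and arrives at
\[
\int_{\partial\Sigma_t^r}\kappa\,ds \;=\; 2\pi\!\left(1-\frac{m}{r}+\frac{mt^2}{r^3}\right)+O(r^{-2}),
\]
which after integration in $t$ yields the explicit coefficient $\omega=\tfrac{8\pi}{3}>0$. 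This calculation is the substance of the proposition, and your sketch stops just short of it. Your coarea formulation would in principle reach the same endpoint, but the expansions of $II_{S_r}$, $II_\Sigma$, $\sin\theta$, $|\nabla^{S_r}u|$, and $dA_{S_r}$ to order $m/r$ must actually be written down and combined before the sign of the coefficient---and hence the proposition---can be claimed.
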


\begin{proof}
As in the proof of Lemma \ref{l:connected} we may assume without loss of generality that $\ell=x$, where the asymptotic coordinates on $M_{end}$ will be denoted by $(x,y,z)$. Observe that on a $t$-level set
\begin{equation}
t=u=\frac{x}{1+\frac{m}{2r}}+\frac{a}{r}+O_2(r^{-2}),
\end{equation}
which implies that
\begin{equation}\label{x}
x=t+\frac{c(t)}{r}+O_2(r^{-2}),\quad\quad\quad c(t)=\frac{tm}{2}-a.
\end{equation}
In the expressions to follow, the subindex $l$ of $O_l$ will be ignored for convenience.
By the implicit function theorem we may solve for $x=x(y,z)$ when $r$ is large. Let
\begin{equation}
r^2=x^2+y^2+z^2=x^2+\rho^2,\quad\quad\quad \tilde{r}^2=t^2 +\rho^2,
\end{equation}
then a calculation shows that
\begin{equation}
x(y,z)
=t+\frac{c(t)}{\tilde{r}}+O(\tilde{r}^{-1}).
\end{equation}
Furthermore, in the asymptotic end
\begin{equation}\label{gmetricexp}
g=\left(1+\frac{m}{2r}\right)^4\left(dx^2
+dy^2 +dz^2\right),
\end{equation}
so that the induced metric on $\Sigma_{t}^r \cap M_{end}$ is given by
\begin{equation}
\gamma=\left(1+\frac{m}{2r}\right)^4\left((1+x_y^2)dy^2 +2x_y x_z dy dz
+(1+x_z^2) dz^2\right).
\end{equation}
From \eqref{x} the partial derivatives may be computed
\begin{equation}
x_y=-\frac{c}{r^3}(x x_y +y)+O\left(\frac{|x x_y|+|y|}{r^4}+\frac{1}{r^3}\right)
\quad\quad\Rightarrow\quad\quad x_y =-\frac{c(t) y}{r^3}+O(r^{-2}),
\end{equation}
and similarly
\begin{equation}
x_z=-\frac{c(t) z}{r^3}+O(r^{-2}).
\end{equation}
Hence
\begin{equation}\label{e:gamma}
\gamma=\left(1+\frac{m}{2r}\right)^4\left[\left(1+\frac{c^2 y^2}{r^6}\right) dy^2 +\frac{2c^2 y z}{r^6} dy dz
+\left(1+\frac{c^2 z^2}{r^6}\right) dz^2 +O(r^{-3})dx^i dx^j\right].
\end{equation}
Since the Gauss curvature consists of second derivatives and quadratic first derivatives, it follows that
\begin{equation}
K=O(r^{-3}).
\end{equation}

Let $r_0>0$ and $c_0>0$ be the constants given by Lemma \ref{l:connected}. From now on, we will only consider $r\geq r_0$. Let $\overline{u}$ and $\underline{u}$ be the max and min levels for $u$ within $M_r$. Then $\overline{u}=r-\frac{m}{2}+O(r^{-1})$ and $\underline{u}=-r-\frac{m}{2}+O(r^{-1})$. At this point, we will break the interval $[\underline{u},\overline{u}]$ into three pieces: $[\underline{u},-r+c_0]$, $[-r+c_0,r-c_0]$, and $[r-c_0,\overline{u}]$, see Figure \ref{pic:lemma5-2}. Consider $t\in[r-c_0,\overline{u}]$, then $0\leq\rho\leq c_1\sqrt{r}$ for some constant $c_1$. On this region all $t$-levels are regular and
\begin{equation}
\int_{\Sigma_t^r}K dA=O\left(\frac{\rho^2}{r^3}\right)=O(r^{-2}),
\end{equation}
so that
\begin{equation}\label{gauss1}
\int_{r-c_0}^{\overline{u}}\left(\int_{\Sigma_t^r}K dA\right) dt
=O(r^{-1}),
\end{equation}
and similarly
\begin{equation}\label{gauss2}
\int_{\underline{u}}^{-r+c_0}\left(\int_{\Sigma_t^r}K dA\right) dt=O(r^{-1}).
\end{equation}

\begin{figure}[hbt!]
\includegraphics[scale=.8]{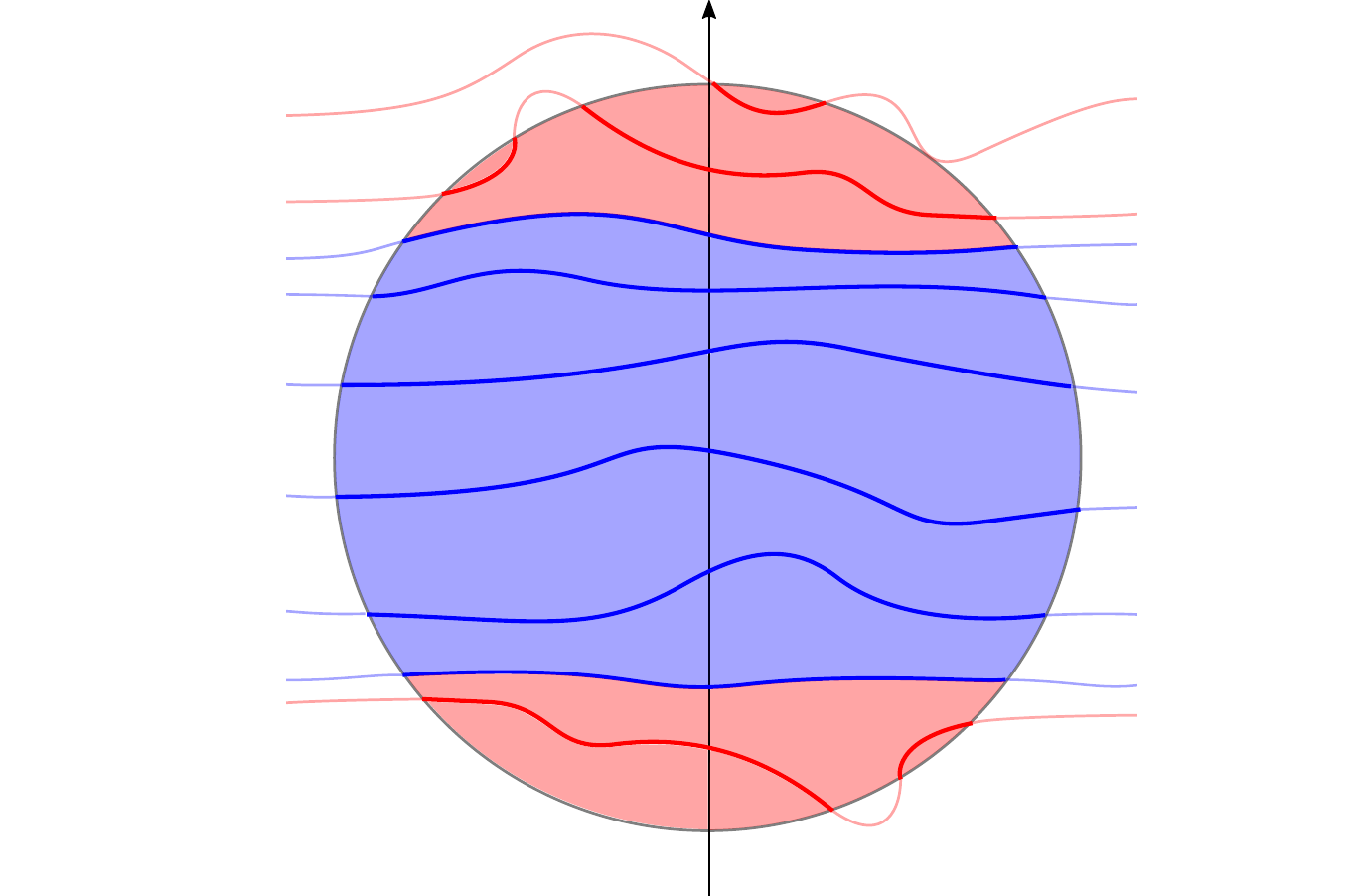}
\begin{picture}(0,0)
\put(-150,205){$x$}
\put(-35,175){$\Sigma_t^r$ for $t\in[r-c_0,\overline{u}]$}
\put(-38,180){\vector(-1,0){85}}
\put(-38,180){\vector(-4,-1){68}}
\put(-48,100){$\Sigma_t^r$ for $t\in[-r+c_0,r-c_0]$}
\put(-50,105){\vector(-4,1){75}}
\put(-50,105){\vector(-4,-1){54}}
\put(-35,25){$\Sigma_t^r$ for $t\in[\underline{u},r-c_0]$}
\put(-38,30){\vector(-1,0){95}}
\end{picture}
\caption{The decomposition of $M_r$ used to estimate the integral in Proposition \ref{geodesiccurvature}.}\label{pic:lemma5-2}
\end{figure}

Now let us restrict attention to the range $|t|\leq r-c_0$, so that $c_2\sqrt{r}\leq \rho\leq r$ for some constant $c_2>0$. According to Lemma \ref{l:connected}, for regular values $t$ in this range, $\Sigma_t^r$ is a connected smooth submanifold with boundary $\partial\Sigma_t^r =S^1 \subset S_r$. Let $\alpha:[0,\theta_0]\to\partial\Sigma_t^r$ be a parameterization and let $\tilde{\nu}$ be an inward pointing normal vector to $\partial\Sigma_t^r$ tangent to $\Sigma_t^r$, both to be chosen later. The geodesic curvature of the circle $\partial\Sigma_t^r$ is given by
\begin{equation}\label{geodesickurv}
\kappa=\left\langle\nu,\nabla_{\frac{\alpha'}{|\alpha'|}}\frac{\alpha'}{|\alpha'|}
\right\rangle
=\frac{\langle\tilde{\nu},\nabla_{\alpha'}\alpha'\rangle}{|\tilde{\nu}||\alpha'|^2},
\end{equation}
where $\nabla$ is the Levi-Civita connection for $g=\langle\cdot,\cdot\rangle$, $\nu=\tfrac{\tilde{\nu}}{|\tilde{\nu}|}$ is the unit normal of $\partial\Sigma_t^r$ tangent to $\Sigma_t^r$, and $\alpha'=\partial_\theta\alpha$ is the velocity vector associated with $\alpha$. Write this curve as
\begin{equation}
\alpha(\theta)=\left(x(\theta),y(\theta),z(\theta)\right)
\end{equation}
where these functions are defined by the equations
\begin{equation}
x(\theta)^2+y(\theta)^2+z(\theta)^2=r^2,\quad\quad\quad u(\alpha(\theta))=t.
\end{equation}
In order to compute $\alpha'$, observe that the equations defining $\alpha'$ (up to scaling) are
\begin{equation}
\alpha \cdot \alpha'=0,\quad\quad\quad \nabla u\cdot\alpha'=0,
\end{equation}
where $\cdot$ represents the Euclidean inner product. It follows that $\alpha$ and $\theta_0$ can be chosen so that
\begin{equation}
\alpha'=(zu_y -y u_z)\partial_x +(x u_z-z u_x)\partial_y +(yu_x -x u_y)\partial_z.
\end{equation}
The partial derivatives have the expansions
\begin{equation}\label{ux}
u_x=\left(1+\frac{m}{2r}\right)^{-1}
-x\left(1+\frac{m}{2r}\right)^{-2}\left(\frac{-mx}{2r^3}\right)
-\frac{ax}{r^3}+O(r^{-2})=1-\frac{m}{2r}+\frac{mx^2}{2r^3}+O(r^{-2}),
\end{equation}
\begin{equation}\label{uy}
u_y=-x\left(1+\frac{m}{2r}\right)^{-2}\left(\frac{-my}{2r^3}\right)
-\frac{ay}{r^3}+O(r^{-2})
=\frac{mxy}{2r^3}+O\left(\frac{\rho}{r^3}+\frac{1}{r^2}\right),
\end{equation}
\begin{equation}\label{uz}
u_z=-x\left(1+\frac{m}{2r}\right)^{-2}\left(\frac{-mz}{2r^3}\right)-\frac{az}{r^3}+O(r^{-2})
=\frac{mxz}{2r^3}+O\left(\frac{\rho}{r^3}+\frac{1}{r^2}\right).
\end{equation}
Therefore
\begin{equation}
\alpha'=\alpha'_x \partial_x+\alpha'_y\partial_y
+\alpha'_z \partial_z=O\left(\frac{\rho}{r^2}\right)\partial_x
+\left(-z+\frac{mz}{2r}+O\left(\frac{\rho}{r^2}\right)\right)\partial_y
+\left(y-\frac{my}{2r}+O\left(\frac{\rho}{r^2}\right)\right)\partial_z,
\end{equation}
and
\begin{equation}\label{e:speed}
|\alpha'|^2=\left(1+\frac{m}{2r}\right)^4 \left[O\left(\frac{\rho^2}{r^2}\right)
+\rho^2\left(1-\frac{m}{r}+\frac{m^2}{4r^2}\right)\right]
=\rho^2\left(1+\frac{m}{r}+O(r^{-2})\right).
\end{equation}

At this point, it is convenient to estimate the value $\theta_0$ of the parameterizing interval. On one hand, the length of $\partial\Sigma_t^r$ may be computed from (\ref{e:speed}),
\begin{equation}\label{e:length1}
\mathrm{Length}(\partial\Sigma_t^r)=\int_0^{\theta_0}|\alpha'|d\theta
=\int_0^{\theta_0}\rho\left(1+\frac{m}{2r}+O(r^{-2})\right)d\theta.
\end{equation}
On the other hand, we can parameterize the $yz$-projection of $\partial\Sigma_t^r$ by $\vartheta\mapsto(\rho(\vartheta)\cos\vartheta,\rho(\vartheta)\sin\vartheta)$ for $\vartheta\in[0,2\pi]$, and use (\ref{e:gamma}) to find
\begin{equation}\label{e:length2}
\mathrm{Length}(\partial\Sigma_t^r)
=\int_0^{2\pi}\sqrt{\det{\gamma|_{\partial\Sigma_t^r}}}d\vartheta
=\int_0^{2\pi}\rho\left(1+\frac{m}{r}+O(r^{-2})\right)d\vartheta.
\end{equation}
Using (\ref{x}), it follows that $\rho=\sqrt{r^2-x^2}$ is a constant (depending on $r$ and $t$) along $\partial\Sigma_t^r$ up to $O(r^{-2})$. Thus we may subtract (\ref{e:length1}) and (\ref{e:length2}) to obtain
\begin{equation}\label{e:length3}
\theta_0=2\pi\left(1+\frac{m}{2r}+O(r^{-2})\right).
\end{equation}

Let us return to our calculation of (\ref{geodesickurv}). The normal vector $\tilde{\nu}$ must satisfy
\begin{equation}
\alpha' \cdot\tilde{\nu}=0,\quad\quad\quad \nabla u\cdot\tilde{\nu}=0,
\end{equation}
and so we may choose
\begin{equation}
\tilde{\nu}=(\alpha'_z u_y-\alpha'_y u_z)\partial_x
+(\alpha'_x u_z-\alpha'_z u_x)\partial_y
+(\alpha'_y u_x-\alpha'_x u_y)\partial_z.
\end{equation}
It follows that the components have the expansions
\begin{equation}
\tilde{\nu}_x =\alpha'_z u_y-\alpha'_y u_z
=\frac{mx\rho^2}{2r^3}+O\left(\frac{\rho}{r^2}\right),
\end{equation}
\begin{equation}
\tilde{\nu}_y =\alpha'_x u_z-\alpha'_z u_x
=-y+\frac{my}{r}-\frac{mx^2 y}{2r^3}+O\left(\frac{\rho}{r^2}\right),
\end{equation}
\begin{equation}
\tilde{\nu}_z=\alpha'_y u_x-\alpha'_x u_y=-z+\frac{mz}{r}-\frac{mx^2 z}{2r^3}+O\left(\frac{\rho}{r^2}\right),
\end{equation}
and
\begin{equation}
|\tilde{\nu}|^2=\rho^2\left(1+\frac{m}{2r}\right)^4 \left(1-\frac{2m}{r}+\frac{mx^2}{r^3}+O(r^{-2})\right)
=\rho^2\left(1+\frac{mx^2}{r^3}+O(r^{-2})\right).
\end{equation}

We now compute the covariant derivative portion of \eqref{geodesickurv}. Observe that
\begin{equation}
\nabla_{\alpha'}\alpha'=\alpha'^i \nabla_i\left(\alpha'^j\partial_j\right)
=\left(\alpha'^i \partial_i \alpha'^j \right)\partial_j +\alpha'^i \alpha'^j \Gamma_{ij}^l \partial_l,
\end{equation}
where $\Gamma_{ij}^l$ are Christoffel symbols. Furthermore
\begin{equation}
\alpha'^i \partial_i \alpha'_x =O\left(\frac{\rho}{r^2}\right),
\end{equation}
\begin{align}
\begin{split}
\alpha'^i \partial_i \alpha'_y=&
O\left(\frac{\rho}{r^2}\right)O\left(\frac{\rho}{r^2}\right)
+\left(-z+\frac{mz}{2r}+O\left(\frac{\rho}{r^2}\right)\right)
\left(-\frac{myz}{2r^3}+O\left(\frac{1}{r^2}+\frac{\rho^2}{r^4}\right)\right)\\
&
+\left(y-\frac{my}{2r}+O\left(\frac{\rho}{r^2}\right)\right)
\left(-1+\frac{m}{2r}-\frac{mz^2}{2r^3}+O\left(\frac{1}{r^2}+\frac{\rho^2}{r^4}\right)\right)\\
=&-y+\frac{my}{r}+O\left(\frac{\rho}{r^2}+\frac{\rho^3}{r^4}\right),
\end{split}
\end{align}
\begin{align}
\begin{split}
\alpha'^i \partial_i \alpha'_z=&
O\left(\frac{\rho}{r^2}\right)O\left(\frac{\rho}{r^2}\right)
+\left(-z+\frac{mz}{2r}+O\left(\frac{\rho}{r^2}\right)\right)
\left(1-\frac{m}{2r}+\frac{my^2}{2r^3}+O\left(\frac{1}{r^2}+\frac{\rho^2}{r^4}\right)\right)\\
&
+\left(y-\frac{my}{2r}+O\left(\frac{\rho}{r^2}\right)\right)
\left(\frac{myz}{2r^3}+O\left(\frac{1}{r^2}+\frac{\rho^2}{r^4}\right)\right)
\\
=&-z+\frac{mz}{r}+O\left(\frac{\rho}{r^2}+\frac{\rho^3}{r^4}\right).
\end{split}
\end{align}
Hence
\begin{equation}
\nabla_{\alpha'}\alpha'=\left(-y+\frac{my}{r}\right)\partial_y
+\left(-z+\frac{mz}{r}\right)\partial_z +O\left(\frac{\rho}{r^2}\right) \partial_l
+\alpha'^i \alpha'^j \Gamma_{ij}^l \partial_l.
\end{equation}
To compute the Christoffel symbols write let $w=1+\tfrac{m}{2r}$ and use \eqref{gmetricexp} to find
\begin{align}
\begin{split}
\Gamma_{ij}^l=&\frac{1}{2}g^{lk}\left(\partial_i g_{kj}+\partial_j g_{ki}-\partial_k g_{ij}\right)\\
=& \frac{1}{2} w^{-4}\delta^{lk}\left(\delta_{kj} \partial_i w^4
+\delta_{ki}\partial_j w^4 -\delta_{ij} \partial_k w^4 \right)\\
=&2\left(\delta_{j}^l\partial_i\log w +\delta_{i}^l \partial_j \log w
-\delta_{ij} \partial^l \log w\right).
\end{split}
\end{align}
Therefore using the orthogonality of $\tilde{\nu}$ and $\alpha'$ yields
\begin{equation}
\langle \tilde{\nu},\alpha'^i \alpha'^j \Gamma_{ij}^l\partial_l\rangle =w^4 \delta_{kl}\tilde{\nu}^k \alpha'^i \alpha'^j \Gamma_{ij}^l=-2|\alpha'|^2 \tilde{\nu}^l \partial_l \log w.
\end{equation}
Since
\begin{equation}
\tilde{\nu}^l \partial_l \log w=\tilde{\nu}^l \left(1+\frac{m}{2r}\right)^{-1}
\left(-\frac{mx^l}{2r^3}\right)
=\frac{m\rho^2}{2r^3}+O(r^{-2}),
\end{equation}
we then have
\begin{equation}
\langle \tilde{\nu},\alpha'^i \alpha'^j \Gamma_{ij}^l\partial_l\rangle
=-\frac{m\rho^4}{r^3}+O\left(\frac{\rho^2}{r^2}\right).
\end{equation}

Putting this altogether produces
\begin{align}
\begin{split}
\langle\tilde{\nu},\nabla_{\alpha'}\alpha'\rangle
=&\left(1+\frac{m}{2r}\right)^4\left[O\left(\frac{\rho}{r^2}\right)
O\left(\frac{|x|\rho}{r^2}
+\frac{\rho}{r^2}\right)-\frac{m\rho^4}{r^3}
+O\left(\frac{\rho^2}{r^2}\right)\right.\\
&\left. +\left(-y+\frac{my}{r}-\frac{mx^2 y}{2r^3}+O\left(\frac{\rho}{r^2}\right)\right)
\left(-y+\frac{my}{r}+O\left(\frac{\rho}{r^2}\right)\right)\right.\\
&\left. +\left(-z+\frac{mz}{r}-\frac{mx^2 z}{2r^3}+O\left(\frac{\rho}{r^2}\right)\right)
\left(-z+\frac{mz}{r}+O\left(\frac{\rho}{r^2}\right)\right)\right]\\
=&\rho^2\left(1+\frac{mx^2}{2r^3}-\frac{m\rho^2}{r^3}+O(r^{-2})\right).
\end{split}
\end{align}
We also have
\begin{equation}
|\tilde{\nu}||\alpha'|
=\rho^2\left(1+\frac{m}{2r}+\frac{mx^2}{2r^3}+O(r^{-2})\right),
\end{equation}
and therefore
\begin{equation}
\frac{\langle\tilde{\nu},\nabla_{\alpha'}\alpha'\rangle}{|\tilde{\nu}||\alpha'|}
=1-\frac{m}{2r}-\frac{m\rho^2}{r^3}+O(r^{-2}).
\end{equation}
Combining this with (\ref{e:length3}) we find that
\begin{equation}
\int_{\partial\Sigma_t^r}\kappa ds=\int_{0}^{\theta_0}
\frac{\langle\tilde{\nu},\nabla_{\alpha'}\alpha'\rangle}{|\tilde{\nu}||\alpha'|}
d\theta
= 2\pi\left(1-\frac{m}{r}+\frac{mt^2}{r^3}\right)+O(r^{-2}).
\end{equation}

By Sard's theorem we may restrict attention to regular level sets when computing \eqref{ty67}. Moreover since for regular levels in the range $|t|\leq r-c_0$ with $r\geq r_0$, the compact surface $\Sigma_t^r$ is connected with nonempty boundary (Lemma \ref{l:connected}), its Euler
characteristic satisfies $\chi(\Sigma_t^r)\leq 1$. Thus using \eqref{gauss1}, \eqref{gauss2}, and the Gauss-Bonnet theorem we find that
\begin{align}
\begin{split}
\int_{\underline{u}}^{\overline{u}}
\int_{\Sigma_t^r}K d\mathcal{H} dt=&
\int_{-r+c_0}^{r-c_0}\left(2\pi\chi(\Sigma_t^r)-\int_{\Sigma_t^r}\kappa ds\right)dt\\
&+\int_{\underline{u}}^{-r+c_0}\left(\int_{\Sigma_{t}^r}KdA\right)dt
+\int_{r-c_0}^{\overline{u}}\left(\int_{\Sigma_{t}^r}KdA\right)dt\\
\leq& \frac{8\pi}{3}m+O(r^{-1}).
\end{split}
\end{align}
\end{proof}

\subsection{Proof of the positive mass theorem}
\begin{proof}[Proof of Theorem \ref{scpmt}]
Let $(M,g)$ be complete with nonnegative scalar curvature and Schwarzschildian. Consider the harmonic function $u$ of Lemma \ref{lharmonic} asymptotic to the linear function $\ell(x,y,z)=x$.
Apply identity \eqref{bdry.strn.1} of Proposition \ref{prop:gen.stern} to $\Omega=M_{r}$ to obtain
\begin{equation}\label{e:cylstern1}
\int_{S_r}\partial_{\upsilon}|\nabla u|dA
\geq\frac12\int_{\underline{u}}^{\overline{u}}
\int_{\Sigma_t^r}\left(\frac{|\nabla^2 u|^2}{|\nabla u|^2}+R_g-2K\right)dAdt,
\end{equation}
where $\overline{u}$ and $\underline{u}$ denote the maximum and minimum values of $u$ on $M_{r}$, and $K$ is the Gaussian curvature of $\Sigma_t^r$.

In order to compute the boundary integral in (\ref{e:cylstern1}), use \eqref{ux}, \eqref{uy}, and \eqref{uz} to find
\begin{equation}
|\nabla u|
=\left(1+\frac{m}{2r}\right)^{-2}\left(u_x^2 +u_y^2+u_z^2\right)^{\frac{1}{2}}
=1-\frac{3m}{2r}+\frac{mx^2}{2r^3}+O_1(r^{-2}).
\end{equation}
It follows that
\begin{equation}
\partial_\upsilon |\nabla u|=
\left(1+\frac{m}{2r}\right)^{-2}\partial_r |\nabla u|=\frac{3m}{2r^2}-\frac{m x^2}{2 r^4}+O(r^{-3}),
\end{equation}
and therefore
\begin{equation}
\int_{S_r}\partial_\upsilon |\nabla u|dA=4\pi\left(\frac{3m}{2}-\frac{m}{6}\right)+O(r^{-1})
=\frac{16\pi}{3}m+O(r^{-1}),
\end{equation}
where we have used
\begin{equation}
\int_{S_r}x^2 dA_{\delta}=\frac{1}{3}\int_{S_r}(x^2+y^2+z^2)dA_{\delta}
=\frac{1}{3}\int_{S_r} r^2 dA_{\delta}=\frac{4\pi}{3}r^4.
\end{equation}
This combined with Proposition \ref{geodesiccurvature} and letting $r\rightarrow\infty$ yields
\begin{equation}\label{hjksa}
16\pi m\geq
\int_{-\infty}^{\infty}\int_{\Sigma_t}\left(\frac{|\nabla^2 u|^2}{|\nabla u|^2}+R_g\right)dAdt,
\end{equation}
from which we find that $m\geq 0$.

Consider now the case of equality $m=0$. Inequality \eqref{hjksa} implies that $R_g\equiv 0$ and $|\nabla^2 u|\equiv 0$. In particular, $\nabla u$ is a parallel vector field. The same procedure above may be applied to second and third harmonic functions $v$ and $w$ of Lemma \ref{lharmonic} asymptotic to the linear functions $\ell=y$ and $\ell=z$, respectively, so that $\nabla v$ and $\nabla w$ are also parallel. Since these three vector fields are linearly independent, $(M,g)$ is flat. Since $(M,g)$ is also complete it must be isometric to Euclidean 3-space.
\end{proof}

\begin{proof}[Proof of Theorem \ref{pmt}]
Let $(M,g)$ be complete of nonnegative scalar curvature, and asymptotically flat with $m$ the mass of a designated end $M_{end}^+$. Let $(\overline{M},\overline{g})$ be the Schwarzschildian manifold of Proposition \ref{simplify} with mass $\overline{m}$ satisfying $|m-\overline{m}|<\varepsilon$.
According to Theorem \ref{scpmt}, $\overline{m}\geq 0$. Since $\varepsilon>0$ is arbitrarily small, we conclude that $m\geq 0$.

The conclusion in the case of equality, $m=0$, follows from the positive mass inequality as in \cite{SchoenYauI}. Namely, one shows through conformal deformation that $(M,g)$ is scalar flat, and then that it is Ricci flat via an infinitesimal Ricci flow.
\end{proof}

\section{The Harmonic Coordinate Method}
\label{sec6} \setcounter{equation}{0}
\setcounter{section}{6}

In this section, we give another way to derive the total mass of an asymptotically flat manifold. Instead of using the trick of approximating by Schwarzschild metrics as in the previous section, we show how the mass term falls out naturally from our boundary term at infinity. Let $(M,g)$ be a complete asymptotically flat Riemannian 3-manifold, and let $M_{ext}$ be the exterior region associated with a specified end $M_{end}$.
According to \cite[Lemma 4.1]{HuiskenIlmanen} the exterior region is diffeomorphic to $\mathbb{R}^3$ minus a finite number of disjoint balls, and has minimal boundary. Let $\{x^1,x^2,x^3\}$ be harmonic coordinates on $M_{ext}$ as in Section \ref{sec:harmoniccoords}, with homogeneous Neumann condition on $\partial M_{ext}$, and let $\vec{x}=(x^1,x^2,x^3)$. For a unit vector $a\in S^2\subset\mathbb{R}^3$, it obviously follows that $u=\vec{x}\cdot a$ is harmonic on $M_{ext}$ with homogeneous Neumann condition. For $L>0$ sufficiently large, consider the coordinate cylinders $C_L:=D_L^{\pm}\cup T_L$ where
\begin{equation}
D_L^{\pm}:=\{\vec{x}\mid \vec{x}\cdot a=\pm L,\text{ }|\vec{x}|^2-(\vec{x}\cdot a)^2\leq L^2\},\quad T_L:=\{\vec{x}\mid |\vec{x}\cdot a|\leq L,\text{ }|\vec{x}|^2-(\vec{x}\cdot a)^2=L^2\}.
\end{equation}
Set $\Omega_L\subset M_{ext}$ to be the closure of the bounded component of $M_{ext}\setminus C_{L}$.
Following the arguments of \cite[Section 4]{Bartnik}, if the scalar curvature $R_g$ is integrable then the mass of $M_{ext}$ is given by
\begin{equation}\label{mass.def}
m=\lim_{L\to\infty}\frac{1}{16\pi}\int_{C_L}\sum_{i}(g_{ij,i}-g_{ii,j})\upsilon^j dA.
\end{equation}
where $\upsilon$ is the outward unit normal to $C_L$.


\subsection{Computation of the mass}\label{sec:geocurv}

To prove inequality \eqref{masslowerb}, begin by applying Proposition \ref{prop:gen.stern} to $u$ on the cylindrical domains $\Omega_L$ (so that $P_2=C_L$ and $P_1=\partial M_{ext}$) to find that
\begin{align}
\frac{1}{2}\int_{\Omega_L}\left(\frac{|\nabla^2u|^2}{|\nabla u|}+R_g|\nabla u|\right)dV\leq  \int_{-L}^L\left(2\pi \chi(\Sigma_t^L)-\int_{\Sigma_t^L\cap T_L}\kappa_{t,L}\right)dt+\int_{C_L}\partial_{\upsilon}|\nabla u| dA,
\end{align}
where $\Sigma_t^L:=\{u=t\}\cap \Omega_L$, and $\kappa_{t,L}$ is the geodesic curvature of the curve $\Sigma_t^L\cap T_L$ viewed as the boundary of $\Sigma_t$. Note that the asymptotics guarantee that, for $L$ sufficiently large, the level sets $\Sigma_t^L$ indeed meet $T_L$ transversely.
We claim next that for every regular value $t\in (-L,L)$, $\Sigma_t^L$ consists of a single connected component, intersecting $T_L$ along the circle $\Sigma_t^L\cap T_L$. Indeed, if this is not the case, then there is a regular value $t\in (-L,L)$ and a component $\Sigma'\subset\Sigma_t^L$ disjoint from $T_L$. Since $M_{ext}$ is diffeomorphic to the compliment of finitely many balls in $\mathbb{R}^3$,
there is a domain $E\subset \Omega_L$ such that $\partial E\setminus \partial M_{ext}=\Sigma'$ and $E\cap T_L=\varnothing$. But since $u$ is harmonic with Neumann boundary conditions on $\partial M_{ext}$ and identically $t$ on $\Sigma'$, the maximum principle would then imply that $u\equiv t$ in $E$, contradicting the fact that $t$ is a regular value.
Thus, $\Sigma_t^L$ has only one component, with boundary given by $\Sigma_t^L\cap T_L$, and as a consequence $\chi(\Sigma_t^L)\leq 1.$
In particular, applying this in the preceding computation gives
\begin{align}\label{e:cptstern2}
\frac{1}{2}\int_{\Omega_L}\left(\frac{|\nabla^2 u|^2}{|\nabla u|}+R_g|\nabla u|\right)dV\leq  4\pi L-\int_{-L}^L\left(\int_{\Sigma_t^L\cap T_L}\kappa_{t,L}\right)dt+\int_{C_L}\partial_{\upsilon}|\nabla u|dA.
\end{align}

The remainder of the proof of Theorem \ref{mass.bd} rests on a computation of the boundary terms in inequality (\ref{e:cptstern2}). To carry out these computations, it will be useful to take $a=\partial_{x^1}$, so that $u=x^1$ is the distinguished coordinate. In what follows the notation $\int_{D_L^{\pm}}\pm f$ represents $\int_{D_L^{+}}f-\int_{D_L^{-}}f$.

\begin{lemma}\label{ddu.comp} In the notation fixed above, we have
\begin{align}
\begin{split}
\int_{C_L}\partial_{\upsilon}|\nabla u| dA=&\frac{1}{2}\int_{D_L^{\pm}}\pm\sum_{j}(g_{1j,j}-g_{jj,1})dA\\
&+\frac{1}{2L}\int_{T_L}\left[x^2(g_{21,1}-g_{11,2})+x^3(g_{31,1}-g_{11,3})\right]dA
+O(L^{1-2q}).
\end{split}
\end{align}
\end{lemma}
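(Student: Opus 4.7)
The plan is to expand the boundary integral $\int_{C_L}\partial_\upsilon|\nabla u|\,dA$ asymptotically on each face of $C_L$, then invoke the harmonic coordinate equation to recombine the resulting expressions into the claimed mass-type integrals. Write $h_{ij}:=g_{ij}-\delta_{ij}$, so $h_{ij}=O(|x|^{-q})$ and $\partial_k h_{ij}=O(|x|^{-q-1})$. Because $u=x^1$ in harmonic coordinates, $|\nabla u|^2=g^{11}=1-h_{11}+O(|x|^{-2q})$, and hence $\partial_i|\nabla u|=-\tfrac12 h_{11,i}+O(|x|^{-2q-1})$.

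First I would compute $\partial_\upsilon|\nabla u|$ face by face. On the disks $D_L^\pm$ the unit $g$-normal points in the $\pm x^1$ direction up to $O(|x|^{-q})$ corrections, and the induced area form equals $dx^2\,dx^3$ up to $O(|x|^{-q})$ corrections; thus the disk contributions reduce to $\mp\tfrac12\int_{D_L^\pm}h_{11,1}\,dx^2\,dx^3$ modulo $O(L^{1-2q})$. On $T_L$ the outer unit $g$-normal is radial in the $(x^2,x^3)$-plane at leading order, and $dA$ agrees with the Euclidean area form up to $O(|x|^{-q})$, producing the contribution $-\tfrac{1}{2L}\int_{T_L}(x^2 h_{11,2}+x^3 h_{11,3})\,dA$ modulo $O(L^{1-2q})$. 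This already accounts for the $-x^a g_{11,a}$ piece on the right-hand side of the claim.

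The crux is to convert the disk terms into the mass integrand $\sum_j(g_{1j,j}-g_{jj,1})$. Linearizing the harmonic coordinate identity $\partial_i(\sqrt{|g|}\,g^{i1})=0$ in $h$ yields
\begin{equation*}
-\tfrac12 h_{11,1}+\tfrac12 h_{22,1}+\tfrac12 h_{33,1}-h_{12,2}-h_{13,3}=O(|x|^{-2q-1}),
\end{equation*}
which, together with the algebraic identity $\sum_j(g_{1j,j}-g_{jj,1})=h_{12,2}+h_{13,3}-h_{22,1}-h_{33,1}$, rearranges to
\begin{equation*}
-h_{11,1}=\sum_j(g_{1j,j}-g_{jj,1})+(h_{12,2}+h_{13,3})+O(|x|^{-2q-1}).
\end{equation*}
The leftover Euclidean divergence $h_{12,2}+h_{13,3}$ on each disk is converted into a boundary line integral on $\partial D_L^\pm\subset T_L$ via planar Stokes. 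Applying the fundamental theorem of calculus in $x^1$ to the difference $h_{1a}(L,\cdot)-h_{1a}(-L,\cdot)$ then rewrites this line integral as $\tfrac{1}{L}\int_{T_L}(x^2 h_{12,1}+x^3 h_{13,1})\,dA$, which supplies precisely the remaining cylindrical contribution $\tfrac{1}{2L}\int_{T_L}[x^2 g_{21,1}+x^3 g_{31,1}]\,dA$ in the claim.

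The main obstacle is simply careful error tracking. At every step, replacing $g$-normals and $g$-area forms by their Euclidean counterparts, and truncating the expansions of $|\nabla u|$, $\sqrt{|g|}\,g^{i1}$, and the $dA_g/dA_\delta$ ratio, introduces second-order errors of size $O(|x|^{-2q-1})$ pointwise, or of size $O(|x|^{-2q})\cdot O(|x|^{-q-1})$ when quadratic in the corrections. Integrated against the $O(L^2)$ area of $C_L$, all of these contribute at most $O(L^{1-2q})$, which is absorbed into the stated remainder thanks to the asymptotic flatness assumption $q>\tfrac12$. Aside from this bookkeeping, the identity in the lemma is a direct consequence of the asymptotic expansion of $|\nabla u|$ together with the harmonic-coordinate divergence identity above.
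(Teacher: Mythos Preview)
Your proposal is correct and follows essentially the same route as the paper: expand $\partial_\upsilon|\nabla u|$ on each face of $C_L$ using $|\nabla u|^2=g^{11}$, invoke the linearized harmonic-coordinate identity to rewrite $-g_{11,1}$ on the disks, and then use the planar divergence theorem on $D_L^{\pm}$ followed by the fundamental theorem of calculus in $x^1$ to convert the leftover $h_{12,2}+h_{13,3}$ into the $T_L$ integral. The only cosmetic difference is that the paper derives the harmonic identity via $g(\nabla_{\partial_1}\partial_1,\partial_j)$ rather than via $\partial_i(\sqrt{|g|}\,g^{i1})=0$, and there is a harmless slip of a factor of $\tfrac12$ in your final sentence (the $\tfrac12$ already present in $-\tfrac12 h_{11,1}$ carries through to give $\tfrac{1}{2L}$, not $\tfrac{1}{L}$, on $T_L$).
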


\begin{proof}
To begin, note that
\begin{equation}
\nabla |\nabla u|=\nabla(g^{11})^{1/2}=-\frac{1}{2}\nabla g_{11}+O(|x|^{-1-2q}),
\end{equation}
where in the second line we have used the decay rates \eqref{e:harmdecay}. Next since the outer normal $\upsilon$ to $C_L$ is given by
\begin{equation}
\upsilon=\pm \partial_1+O(|x|^{-q})\quad\text{ on }\quad D_L^{\pm},\quad\text{ and }\quad\upsilon=\frac{x^2\partial_2+x^3\partial_3}{L}+O(|x|^{-q})\quad\text{ on }T_L,
\end{equation}
it follows that
\begin{equation}
\int_{C_L}\partial_{\upsilon}|\nabla u| dA=-\frac{1}{2}\int_{D^\pm_L}\pm g_{11,1}dA-\frac{1}{2L}\int_{T_L}(x^2g_{11,2}+x^3g_{11,3})dA+O(L^{1-2q}).
\end{equation}
Now, because $x^1$ is harmonic we see that
\begin{align}
\begin{split}
g_{11,1}=&-2g(\nabla_{\partial_1}\partial_1,\partial_1)+O(|x|^{-1-2q})\\
=&2g(\nabla_{\partial_2}\partial_1,\partial_2)
+2g(\nabla_{\partial_3}\partial_1,\partial_3)+O(|x|^{-1-2q})\\
=&-2g_{21,2}-2g_{31,3}+g_{22,1}+g_{33,1}+O(|x|^{-1-2q}),
\end{split}
\end{align}
and therefore
\begin{align}
\begin{split}
\int_{C_L}\partial_{\upsilon}|\nabla u| dA=&\int_{D_L^{\pm}}\pm(g_{12,2}+g_{13,3}-\frac{1}{2}g_{22,1}-\frac{1}{2}g_{33,1})dA\\
&-\frac{1}{2L}\int_{T_L}(x^2g_{11,2}+x^3g_{11,3})dA+O(L^{1-2q})\\
=&\frac{1}{2}\int_{D_L^{\pm}}\pm(g_{12,2}-g_{22,1}+g_{13,3}-g_{33,1})dA\\
&+\int_{D_L^+}\frac{1}{2}(g_{12,2}+g_{13,3})dA-\int_{D_L^-}\frac{1}{2}(g_{12,2}+g_{13,3})dA\\
&-\frac{1}{2L}\int_{T_L}(x^2g_{11,2}+x^3g_{11,3})dA+O(L^{1-2q}).
\end{split}
\end{align}
Applying the divergence theorem to the penultimate line above, and subsequently employing the fundamental theorem of calculus on $T_L$ yields
\begin{align}
\begin{split}
\int_{C_L}\partial_{\upsilon}|\nabla u| dA=&\frac{1}{2}\int_{D_L^{\pm}}\pm(g_{12,2}-g_{22,1}+g_{13,3}-g_{33,1})dA\\
&+\int_{\partial D_L^+}\frac{1}{2L}(x^2g_{12}+x^3g_{13})dA-\int_{\partial D_L^-}\frac{1}{2L}(x^2g_{12}+x^3g_{13})dA\\
&-\frac{1}{2L}\int_{T_L}(x^2g_{11,2}+x^3g_{11,3})dA+O(L^{1-2q})\\
=&\frac{1}{2}\int_{D_L^{\pm}}\pm(g_{12,2}-g_{22,1}+g_{13,3}-g_{33,1})dA+\int_{T_L}\partial_1\left(\frac{x^2}{2L}g_{12}+\frac{x^3}{2L}g_{13}\right)dA\\
&-\frac{1}{2L}\int_{T_L}(x^2g_{11,2}+x^3g_{11,3})dA+O(L^{1-2q})\\
=&\frac{1}{2}\int_{D_L^{\pm}}\pm (g_{12,2}-g_{22,1}+g_{13,3}-g_{33,1})dA\\
&+\frac{1}{2L}\int_{T_L}[x^2(g_{21,1}-g_{11,2})+x^3(g_{31,1}-g_{11,3})]dA+O(L^{1-2q}).
\end{split}
\end{align}
\end{proof}

\begin{lemma}\label{curv.comp}
In the notation established above, we have
\begin{align}
\begin{split}
\int_{-L}^L\left(\int_{\Sigma_t^L\cap T_L}\kappa_{t,L}\right)dt=&4\pi L+\frac{1}{2L}\int_{T_L}\left[x^2(g_{33,2}-g_{23,3})+x^3(g_{22,3}-g_{32,2})\right]dA\\
&+O(L^{1-2q}+L^{-q}).
\end{split}
\end{align}
\end{lemma}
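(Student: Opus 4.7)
The strategy is to apply the Gauss-Bonnet theorem to each disk $\Sigma_t^L$ and then convert the resulting Gaussian curvature integral into a boundary flux along $\gamma_{t,L} := \Sigma_t^L \cap T_L$ via the planar divergence theorem. Since $a = \partial_{x^1}$, we have $u = x^1$ and the level set $\Sigma_t = \{x^1 = t\}$ is a coordinate plane; hence $\Sigma_t^L$ is a planar region bounded by the round circle $\gamma_{t,L} = \{x^1 = t,\ (x^2)^2+(x^3)^2 = L^2\}$. For regular $t$ and $L$ sufficiently large the surface $\Sigma_t^L$ is a topological disk (using the connectedness argument preceding this lemma together with the fact that $\Sigma_t^L$ asymptotically coincides with the Euclidean planar disk), so Gauss-Bonnet gives
\begin{equation*}
\int_{\gamma_{t,L}} \kappa_{t,L}\,ds = 2\pi - \int_{\Sigma_t^L} K\,dA,
\end{equation*}
where $K$ is the Gaussian curvature of the induced metric. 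Integrating over $t \in [-L,L]$ produces the leading term $4\pi L$ plus a correction $-\int_{-L}^L\int_{\Sigma_t^L} K\,dA\,dt$ that must be identified.

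Writing $g_{ij} = \delta_{ij} + h_{ij}$ with $h_{ij} = O(|x|^{-q})$, the linearization of 2D scalar curvature applied to the induced metric on $\Sigma_t$ reads
\begin{equation*}
2K = \partial_2 F^2 + \partial_3 F^3 + O(|x|^{-2q-2}),\qquad F := \bigl(\partial_3 h_{23} - \partial_2 h_{33},\ \partial_2 h_{23} - \partial_3 h_{22}\bigr).
\end{equation*}
The Euclidean divergence theorem on the flat disk $\Sigma_t^L$, with outward normal $n_\delta = (x^2,x^3)/L$ along $\gamma_{t,L}$, combined with the computation
\begin{equation*}
F\cdot n_\delta = -\tfrac{1}{L}\bigl[x^2(g_{33,2} - g_{23,3}) + x^3(g_{22,3} - g_{23,2})\bigr],
\end{equation*}
converts $\int_{\Sigma_t^L} K\,dA$ into an integral along $\gamma_{t,L}$. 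Recognizing $d\ell\, dt$ on the cylinder as the $T_L$ area element $dA$ (up to a multiplicative factor $1 + O(|x|^{-q})$) and integrating in $t$ then produces the required surface integral over $T_L$ stated in the lemma.

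The main obstacle is the systematic control of lower-order errors, which arise from three distinct sources: (i) the nonlinear-in-$h$ contribution to $K$ of size $O(|x|^{-2q-2})$, which integrates to $O(L^{1-2q})$ over a disk of area $L^2$ and a range of length $L$ in $t$; (ii) the replacement of the induced area element on $\Sigma_t^L$, the intrinsic arc length on $\gamma_{t,L}$, and the intrinsic outward normal by their Euclidean counterparts, each introducing $O(|x|^{-q})$ multiplicative errors against an integrand of size $|x|^{-q-1}$; and (iii) the use of $d\ell\, dt \approx dA_{T_L}$ on the cylinder. Careful but routine bookkeeping using the asymptotic decay \eqref{e:harmdecay} bounds each of these contributions by $O(L^{1-2q}+L^{-q})$, yielding the claimed identity.
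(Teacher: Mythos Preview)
Your approach has a genuine gap concerning the level sets that pass through the compact part of $M_{ext}$. The claim that $\Sigma_t^L$ is a topological disk with sole boundary $\gamma_{t,L}$ is \emph{not} what the connectedness argument preceding the lemma establishes: that argument only shows $\Sigma_t^L$ is connected with a single boundary circle on $T_L$, hence $\chi(\Sigma_t^L)\leq 1$. For values of $t$ in a fixed bounded interval (independent of $L$), the surface $\Sigma_t^L$ may meet $\partial M_{ext}$ along additional boundary curves and may have $\chi<1$. For such $t$ your Gauss--Bonnet step $\int_{\gamma_{t,L}}\kappa_{t,L}=2\pi-\int_{\Sigma_t^L}K$ fails, and your linearized formula $2K=\operatorname{div}F+O(|x|^{-2q-2})$ is unavailable on the portion of $\Sigma_t^L$ lying in the compact region (where $h_{ij}$ is not small and $x^2,x^3$ need not even be coordinates on $\Sigma_t$). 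Both errors are of size $O(1)$ per level set and persist over a $t$-interval of positive length, so they cannot be absorbed into $O(L^{1-2q}+L^{-q})$.

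The paper avoids this entirely by computing $\kappa_{t,L}$ \emph{directly} on the circle $\gamma_{t,L}\subset T_L$, via the formula $\kappa_{t,L}=-\langle\beta,\nabla_\tau\tau\rangle$ expanded in the frame $X=x^2\partial_2+x^3\partial_3$, $Y=x^3\partial_2-x^2\partial_3$. The key point is that $\gamma_{t,L}$ sits at radius $\geq L$ for \emph{every} $t\in[-L,L]$, so the asymptotic expansion applies uniformly; the interior geometry of $\Sigma_t^L$ is never touched. After expanding $\langle X,\nabla_YY\rangle$ in metric derivatives, an integration by parts on the circle (using double-angle identities) collapses many terms and yields the stated boundary integrand. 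Your route could in principle be repaired by treating the range $|t|\leq C$ separately via such a direct computation, but at that point you have essentially reproduced the paper's argument, and the Gauss--Bonnet detour buys nothing.
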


\begin{proof}
To begin, recall that the geodesic curvature $\kappa_{t,L}$ is given by
\begin{equation}
\kappa_{t,L}=\langle \nabla_{\tau}\beta,\tau\rangle=-\langle \beta,\nabla_{\tau}\tau\rangle,
\end{equation}
where $\tau$ is a unit tangent vector to $\Sigma_t^L\cap T_L$ and $\beta$ is the outward pointing unit normal to $\Sigma_t^L\cap T_L$ along $\Sigma_t^L$. Let
\begin{equation}
X:=x^2\partial_2+x^3\partial_3\quad\text{ and }\quad Y:=x^3\partial_2-x^2\partial_3.
\end{equation}
Then by setting $\tilde{X}:=X-\langle X,\tau\rangle \tau$ we may take
\begin{equation}
\tau=\frac{Y}{|Y|}\quad\text{ and }\quad\beta=\frac{\tilde{X}}{|\tilde{X}|}.
\end{equation}
Consequently
\begin{eqnarray}\label{e:geocurv2}
\kappa_{t,L}=-\left\langle \frac{\tilde{X}}{|\tilde{X}|},\nabla_{\tau}\tau\right\rangle
=\frac{-1}{|\tilde{X}||Y|^3}\left(|Y|\langle X,\nabla_YY\rangle-\langle X,Y\rangle\langle \nabla |Y|,Y\rangle\right).
\end{eqnarray}
The decay conditions \eqref{e:harmdecay} imply that
\begin{equation}
|\nabla|Y||=O(|x|^{-q})\quad\text{ and }\quad\langle X,Y\rangle=O(|x|^{2-q}).
\end{equation}
It follows that
\begin{equation}
\frac{\langle X,Y\rangle \langle \nabla|Y|,Y\rangle}{|\tilde{X}||Y|^3}
=O\left(\frac{|x|^{2-q}|x|^{-q}|x|}{|x|^4}\right)=O(|x|^{-1-2q}),
\end{equation}
and hence
\begin{equation}
\kappa_{t,L}=\frac{-\langle X,\nabla_YY\rangle}{|\tilde{X}||Y|^2}+O(|x|^{-1-2q}).
\end{equation}

A direct computation gives
\begin{equation}
\nabla_YY=-X+(x^3)^2\nabla_{\partial_2}\partial_2+(x^2)^2
\nabla_{\partial_3}\partial_3-2x^2x^3\nabla_{\partial_2}\partial_3.
\end{equation}
Upon expanding $\langle X,\nabla_YY\rangle=\langle x^2\partial_2+x^3\partial_3,\nabla_YY\rangle$ in terms of the metric derivatives, we see that \eqref{e:geocurv2} becomes
\begin{align}\label{e:geocurv3}
\begin{split}
\kappa_{t,L}=&\frac{|X|^2}{|\tilde{X}||Y|^2}+O(L^{-1-2q})-\frac{\langle X,(x^3)^2\nabla_{\partial_2}\partial_2+(x^2)^2\nabla_{\partial_3}
\partial_3-2x^2x^3\nabla_{\partial_2}\partial_3\rangle}{L^3}\\
=&\frac{|X|}{|Y|^2}+O(L^{-2-q}+L^{-1-2q})-\frac{1}{2L^3}(x^2(x^3)^2g_{22,2}
+(x^2)^2x^3g_{33,3})\\
&+[(x^2)^2x^3+\frac{1}{2}(x^3)^3]\frac{g_{22,3}}{L^3}
+[(x^3)^2x^2+\frac{1}{2}(x^2)^3]\frac{g_{33,2}}{L^3}
-(x^2)^3\frac{g_{23,3}}{L^3}-(x^3)^3\frac{g_{32,2}}{L^3},
\end{split}
\end{align}
where the decay properties \eqref{e:harmdecay} have been used repeatedly. At this point it will be useful to parameterize $\Sigma_t^L\cap T_L$ by
$$[0,2\pi]\ni s\mapsto \gamma(s):=(t, L\cos(s),L\sin(s)).$$
Notice that $\gamma'(s)=-Y$. We then have
\begin{align}\label{e:partgeocurv1}
\begin{split}
\int_{\Sigma_t^L\cap T_L}\frac{|X|}{|Y|^2}=&\int_0^{2\pi}
\left(|Y|\frac{|X|}{|Y|^2}\right)(\gamma(s))ds\\
=&\int_0^{2\pi}\left(1+\frac{|X|-|Y|}{|Y|}\right)(\gamma(s))ds\\
=&2\pi+\int_0^{2\pi}\left(\frac{|X|^2-|Y|^2}{|Y|(|X|+|Y|)}\right)
(\gamma(s))ds\\
=&2\pi+\frac{1}{2L^2}\int_0^{2\pi}(|X|^2-|Y|^2)(\gamma(s))ds+O(L^{-2q}).
\end{split}
\end{align}
Next compute
\begin{align}
\begin{split}
&\frac{1}{L^2}\int_0^{2\pi}(|X|^2-|Y|^2)(\gamma(s))ds\\
=&\int_0^{2\pi}
(\cos^2(s)-\sin^2(s))g_{22}(\gamma(s))ds
+\int_0^{2\pi}(\sin^2(s)-\cos^2(s))g_{33}(\gamma(s))ds\\
&+\int_0^{2\pi}4\sin(s)\cos(s)g_{23}(\gamma(s))ds\\
=&\int_0^{2\pi}\frac{1}{2}\frac{d}{ds}[\sin(2s)]
g_{22}(\gamma(s))ds-\int_0^{2\pi}\frac{1}{2}
\frac{d}{ds}[\sin(2s)]g_{33}(\gamma(s))ds\\
&-\int_0^{2\pi}\frac{d}{ds}[\cos(2s)]g_{23}(\gamma(s))ds\\
=&\frac{1}{L^2}\int_0^{2\pi}[x^2x^3(x^3g_{22,2}-x^2g_{22,3})]\circ \gamma ds+\frac{1}{L^2}\int_0^{2\pi}[x^2x^3(x^2g_{33,3}-x^3g_{33,2})]\circ \gamma ds\\
&+\frac{1}{L^2}\int_0^{2\pi}[((x^2)^2-(x^3)^2)(x^2g_{23,3}-x^3g_{23,2})]\circ \gamma ds,
\end{split}
\end{align}
where in the final line we have integrated by parts and used the double angle formulas to write $\sin(2s)$ and $\cos(2s)$ in terms of $x^2$ and $x^3$.
Combining this with \eqref{e:partgeocurv1} produces
\begin{align}\label{e:partgeocurv2}
\begin{split}
\int_{\Sigma_t^L\cap T_L}\frac{|X|}{|Y|^2}=&2\pi+\frac{1}{2L^3}\int_{\Sigma_t^L\cap T_L}x^2x^3[x^3g_{22,2}+x^2g_{33,3}]+O(L^{-2q})\\
&-\frac{1}{2L^3}\int_{\Sigma_t^L\cap T_L}x^2x^3[x^2g_{22,3}+x^3g_{33,2}]\\
&+\frac{1}{2L^3}\int_{\Sigma_t^L\cap T_L}[(x^2)^2-(x^3)^2](x^2g_{23,3}-x^3g_{32,2}).
\end{split}
\end{align}
We then have that \eqref{e:geocurv3} and \eqref{e:partgeocurv2} yield
\begin{align}
\begin{split}
\int_{\Sigma_t^L\cap T_L}\kappa_{t,L}=&2\pi+\frac{1}{2L^3}\int_{\Sigma_t^L\cap T_L}x^2x^3[x^3g_{22,2}+x^2g_{33,3}]\\
&-\frac{1}{2L^3}\int_{\Sigma_t^L\cap T_L}x^2x^3[x^2g_{22,3}+x^3g_{33,2}]\\
&+\frac{1}{2L^3}\int_{\Sigma_t^L\cap T_L}[(x^2)^2-(x^3)^2](x^2g_{23,3}-x^3g_{32,2})\\
&-\int_{\Sigma_t^L\cap T_L}\frac{1}{2L^3}(x^2(x^3)^2g_{22,2}+(x^2)^2x^3g_{33,3})\\
&+\int_{\Sigma_t^L\cap T_L}\left(\left[(x^2)^2x^3+\frac{1}{2}(x^3)^3\right]\frac{g_{22,3}}{L^3}+\left[(x^3)^2x^2+\frac{1}{2}(x^2)^3\right]\frac{g_{33,2}}{L^3}\right)\\
&-\int_{\Sigma_t^L\cap T_L}\left((x^2)^3\frac{g_{23,3}}{L^3}+(x^3)^3\frac{g_{32,2}}{L^3}\right)+O(L^{-1-q}+L^{-2q})\\
=&2\pi+\frac{1}{2L}\int_{\Sigma_t^L\cap T_L}(x^3g_{22,3}-x^3g_{32,2}+x^2g_{33,2}-x^2g_{23,3})+O(L^{-1-q}+L^{-2q}).
\end{split}
\end{align}
Finally, integrating over $[-L,L]$ gives the desired identity.
\end{proof}

\subsection{Proof of Theorem \ref{mass.bd}}

Recall that from \eqref{e:cptstern2} we have
\begin{equation}
\frac{1}{2}\int_{\Omega_L}\left(\frac{|\nabla^2 u|^2}{|\nabla u|}+R_g|\nabla u|\right)dV\leq  4\pi L-\int_{-L}^L\left(\int_{\Sigma_t^L\cap T_L}\kappa_{t,L}\right)dt+\int_{C_L}\partial_{\upsilon}|\nabla u|dA.
\end{equation}
On the other hand, it follows from Lemmas \ref{ddu.comp} and \ref{curv.comp} that
\begin{align}
\begin{split}
\int_{C_L}\partial_{\upsilon}|\nabla u| dA-\int_{-L}^L\left(\int_{\Sigma_t^L\cap T_L}\kappa_{t,L}\right)dt=&\frac{1}{2}\int_{D_L^{\pm}}\pm \sum_j (g_{1j,j}-g_{jj,1})dA\\
&+\frac{1}{2}\int_{T_L}\left[\frac{x^2}{L}(g_{21,1}-g_{11,2})+\frac{x^3}{L}(g_{31,1}-g_{11,3})\right]dA\\
&+\frac{1}{2}\int_{T_L}\left[\frac{x^2}{L}(g_{23,3}-g_{33,2})+\frac{x^3}{L}(g_{32,2}-g_{22,3})\right]dA\\
&-4\pi L+o(1)\\
=&-4\pi L+\frac{1}{2}\int_{C_L}\sum_j (g_{ij,j}-g_{jj,i})\upsilon^i dA+o(1),
\end{split}
\end{align}
Therefore
\begin{equation}
\frac{1}{2}\int_{\Omega_L}\left(\frac{|\nabla^2 u|^2}{|\nabla u|}+R_g|\nabla u|\right)dV\leq \frac{1}{2}\int_{C_L}\sum_j (g_{ij,j}-g_{jj,i})\upsilon^i dA+o(1),
\end{equation}
and taking the limit as $L\to\infty$ gives the desired inequality \eqref{masslowerb}.

Consider now the case of equality when $m=0$. From the arguments above, this implies that the harmonic coordinate function is linear $|\nabla^2 u|\equiv 0$ and that the Euler characteristic of the level sets is constant $\chi(\Sigma_t)=1$. In particular the boundary of the exterior region is empty $\partial N=\emptyset$, and thus $M\cong\mathbb{R}^3$. Since there are three linearly independent harmonic coordinate functions with $\nabla^2u\equiv 0$, the manifold is flat, yielding the isometry $(M,g)\cong(\mathbb{R}^3,\delta)$.

\bigskip
\noindent\textbf{Acknowledgements.} The authors would like to thank Dan Lee
for several helpful comments on an earlier version of this manuscript, in particular for pointing out the simplification in Remark \ref{rem1}.

\end{document}